\theoremstyle{plain}
\newtheorem{lemma}{Lemma}[section]
\newtheorem{theorem}[lemma]{Theorem}
\newtheorem{proposition}[lemma]{Proposition}
\newtheorem{corollary}[lemma]{Corollary}
\theoremstyle{definition}
\newtheorem{example}[lemma]{Example}
\newtheorem{remark}[lemma]{Remark}
\newtheorem{definition}[lemma]{Definition}
\numberwithin{equation}{section} \thispagestyle{empty} \voffset
\begin{document}
\baselineskip 15truept
\title{Unitification of Weakly p. q.-Baer $*$-rings}
\subjclass[2010]{Primary 16W10; Secondary 16D25 } 
 \maketitle 
 \begin{center}
Anil Khairnar\\
 {\small Department of Mathematics, Abasaheb Garware College, Pune-411004, India.\\
  \email{\emph{anil.khairnar@mesagc.org; anil\_maths2004@yahoo.com}}}\\  
  \vspace{.4cm}
  B. N. Waphare\\  
 {\small Center for Advanced Studies in Mathematics,
Department of Mathematics,\\ Savitribai Phule Pune University, Pune-411007, India.}\\
 \email{\emph{bnwaph@math.unipune.ac.in; waphare@yahoo.com}} 
 \end{center} 
\begin{abstract} In this paper, we introduce a concept of weakly principally quasi-Baer $*$-rings
in terms of central cover. We prove that a $*$-ring is a principally quasi-Baer $*$-ring
if and only if it is weakly principally quasi-Baer $*$-ring with unity. 
A partial solution to the problem similar to unitification problem raised by S. K. Berberian is obtained.
\end{abstract}
 \noindent {\bf Keywords:}  weakly p.q.-Baer $*$-rings, p.q.-Baer $*$-rings, Rickart $*$-rings, Baer $*$-rings. 
\section{Introduction}
        Throughout this paper, $R$ denotes an associative ring.
                        A $*$-ring (ring with involution) $R$ is a ring equipped with an involution $x \rightarrow x^* $,
                        that is an additive anti-automorphism of period at most two.
                        An element $e$ in a $*$-ring $R$ is called a {\it projection} if it is self adjoint (i.e., $e=e^*$)
                        and {\it idempotent} (i.e., $e^2=e$). Let $S$ be a nonempty subset of $R$. We write
                        $r_R(S)= \{a \in R ~|~ s a = 0, ~\forall ~s \in S \}$, called the
                        {\it right annihilator} of $S$ in $R$, and $l_R(S)= \{a \in R ~|~  a s = 0, ~\forall ~s \in S \}$, 
                         called the
                        {\it left annihilator} of $S$ in $R$.
                      For projections $e, f$, we write $e \leq f$ in case
$e=ef$. By \cite [Proposition 1, page 4]{Ber}, the relation $\leq$
is a partial order relation on the set of projections in a
$*$-ring.

 In \cite{Kap}, Kaplansky introduced Baer rings and Baer $*$-rings to
abstract various properties of $AW^*$-algebras, von Neumann
algebras and complete $*$-regular rings.
 A $*$-ring $R$ is said to be a {\it Baer $*$-ring}, if for every nonempty
subset $S$ of $R$, $r_R(S)=eR$, where $e$ is a projection in $R$. 
     Herstien was convinced that the simplicity of the simple Lie algebra should follow solely 
     from the fact that $R=M_n(F)$ ($F$-a field) is a simple ring. 
     This led him in the $1950$'s to develop his Lie theory of arbitrary simple rings with involutions.
     Early motivation for studying $*$-rings came from rings of operators. 
     If $\mathscr{B}(H)$ is the set of all bounded linear operators on a (real or complex) Hilbert space $H$,
     then each $\phi \in \mathscr{B}(H)$ has an adjoint, $adj (\phi) \in \mathscr{B}(H)$,
     and $\phi \rightarrow adj(\phi)$ is an involution on the ring $\mathscr{B}(H)$.
                          
  According to Birkenmeier et al. \cite{Bir}, a $*$-ring $R$ is said to be a {\it quasi-Baer $*$-ring} 
  if the right annihilator of every
ideal of $R$ is generated, as a right ideal, by a projection in $R$.
 In the same paper \cite{Bir}, they provide
examples of Baer
rings which are quasi-Baer $*$-rings but not Baer $*$-rings.
                     In \cite{Ber}, Berberian developed the theory of Baer $*$-rings and Rickart (p.p.)
                     $*$-rings. 
                     
                     The following definitions are required in sequel, can be
                     found in \cite{Ber}.
                      A $*$-ring $R$ is said to be a {\it Rickart $*$-ring},
if for each $x \in R$, $r_R(\{x\})=eR$, where $e$ is a projection
in $R$.  For each element $a$ in a Rickart $*$-ring, there is unique projection
$e$ such that $ae=a$ and $ax=0$ if and only if $ex=0$, called the {\it
right projection} of $a$ denoted by $RP(a)$. Similarly, the left
projection $LP(a)$ is defined for each element $a$ in a Rickart $*$-ring $A$.
 A $*$-ring $R$ is said to be a {\it weakly Rickart  $*$-ring}, if for any $x \in R$, there exists a projection $e$
  such that (1) $xe=x$, and (2) if $xy=0$ then $ey=0$.  
 Let $R$ be a $*$-ring and $x \in R$, we say that $x$
possesses a {\it central cover} if there exists a smallest central
projection $h$ such that $hx=x$. If such projection $h$ exists,
then it is unique, and is called the central cover of $x$, denoted
by $h=C(x)$ or $h=C_R(x)$.

 In \cite{Bir2}, Birkenmeier  introduced principally quasi-Baer (p.q.-Baer) rings as a generalization of quasi-Baer rings.
  Birkenmeier et al. \cite{Bir4} introduced principally quasi-Baer
 (p.q.-Baer) $*$-rings. A $*$-ring $R$ is said to be {\it a p.q.-Baer $*$-ring} if,
   for every principal right ideal $aR$ of $R$, $r_R(aR)=eR$, where $e$ is a projection in
$R$. 
From the above definition, it follows that $l_R(aR)=Rf$ for a suitable projection $f$.
 Note that, $R$ is a quasi-Baer $*$-ring if and only if $M_n (R)$ ($n \times n$ matrix ring with $*$ transpose involution)
 is a quasi-Baer (hence p.q.-Baer) $*$-ring for all $n\geq 1$  \cite[Proposition 2.6]{Bir1}. 
 In general this statement is not true for Baer $*$-rings  \cite[Theorem 6]{Tha}. 
 Thus the class of  p.q.-Baer $*$-rings is very much larger than that of Baer $*$-rings.
 
    In \cite{Ber}, it is proved that every element of a Baer
    $*$-ring has a central cover.  In the second section of this paper, 
    we extend this result for p.q.-Baer $*$-rings. In fact,    
    we generalize results of Baer $*$-rings to  p.q. Baer
    $*$-rings. We have given an example of a $*$-ring
$R$, which is a Rickart $*$-ring as well as a p.q.-Baer $*$-ring,
and an element $x \in R$ such that $RP(x)$ is not equal to $C(y)$
for any $y \in R$.  

  In the third section, we introduce
 the concept of weakly p.q.-Baer $*$-rings. We provide an example of an abelian
p.q.-Baer $*$-ring which is not a weakly Rickart $*$-ring.
   Efforts are taken to obtain the exact difference between
  p.q.-Baer $*$-rings and weakly p.q.-Baer $*$-rings.
  
  Berberian \cite[page 33]{Ber}, raised the following Problem.\\
   {\bf Problem 1:} Can every weakly Rickart $*$-ring be
  embedded in a Rickart $*$-ring? with preservation of $RP$'s?\\
 \indent Berberian has given a partial solution to this problem.
 In \cite{Wap}, Thakare and Waphare gave more general partial solution to Problem 1.
 Till date this problem is open. In view of Problem 1, it is
natural to raise the following problem.\\
{\bf Problem 2:} Can every weakly p.q.-Baer $*$-ring be
  embedded in a p.q.-Baer $*$-ring? with preservation of central covers?\\
 \indent In the last section we give the partial solution of Problem 2, analogous
 to the partial solution of Problem 1 given by Thakare and Waphare in
 \cite{Wap}.
  \section{Central Cover}      
In this section, we extend the results of Baer $*$-rings (proved in \cite{Ber}) to p.q.-Baer
$*$-rings.  Also, we study the properties of a central cover of an element in a p.q.-Baer $*$-ring.\\
    \indent According to Birkenmeier et al. \cite{Bir3}, the involution of a $*$-ring
$R$ is  {\it semi-proper} if $aRa^*\neq 0$ for every nonzero
element $a\in R$.
\begin{proposition} \label{pr1} If $R$ is a p.q.-Baer $*$-ring,
then $R$ has the unity element and the involution of $R$ is
semi-proper.
\end{proposition}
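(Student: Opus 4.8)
The plan is to handle the two assertions in turn, first producing a two-sided identity and then using it to verify semi-properness.

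For the existence of unity, I would apply the p.q.-Baer hypothesis to the (principal) zero right ideal $0\cdot R=\{0\}$. Since every element of $R$ annihilates $0$, we have $r_R(0\cdot R)=R$, so the hypothesis yields a projection $e$ with $R=eR$. Writing an arbitrary $x\in R$ as $x=ey$ and using $e^2=e$ gives $ex=x$, so $e$ is a left identity. To upgrade this to a two-sided identity I would exploit the involution: applying $*$ to $ex=x$ gives $x^*e=x^*$ (using $e^*=e$ and the anti-automorphism property), and since $*$ is a bijection of $R$, this says $ye=y$ for every $y\in R$. Hence $e$ is also a right identity, and $R$ has unity $1=e$.

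For semi-properness, I would argue by contradiction, supposing $aRa^*=0$ for some nonzero $a$. The first observation is that this is exactly the statement $a^*\in r_R(aR)$, since $y\in r_R(aR)$ means $axy=0$ for all $x\in R$. By the p.q.-Baer condition write $r_R(aR)=fR$ with $f$ a projection. Then $a^*\in fR$ forces $fa^*=a^*$ (using $f^2=f$), and taking adjoints gives $af=a$ because $f^*=f$. On the other hand $f\in fR=r_R(aR)$, and since $R$ now has a unity we have $a=a\cdot 1\in aR$, so $af=0$. Combining $af=a$ with $af=0$ gives $a=0$, contradicting the choice of $a$.

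The argument is short once the correct element is exhibited inside an annihilator; the point that requires care, and which I regard as the crux, is the interplay between the projection and the involution in each part: converting a one-sided identity into a two-sided one, and converting the relation $fa^*=a^*$ into $af=a$ so that it can be played against $f\in r_R(aR)$. Note also that the order matters, since semi-properness is deduced using $a\in aR$, so the existence of unity must be established first.
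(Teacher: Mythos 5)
Your proof is correct. The unity half is essentially identical to the paper's argument: apply the p.q.-Baer condition to $0\cdot R$ to get $r_R(0R)=R=eR$, deduce that $e$ is a left unity, and use the involution to upgrade it to a two-sided unity. Where you diverge is the semi-properness half: the paper does not argue this at all, but simply cites Proposition 2 of Aburawash and Saad (reference [Usa] in the bibliography), which states that a $*$-ring in which annihilators of principal right ideals are generated by projections has semi-proper involution. Your argument is a direct, self-contained verification of exactly that fact: from $aRa^*=0$ you correctly observe $a^*\in r_R(aR)=fR$, hence $fa^*=a^*$ and, applying $*$, $af=a$; on the other hand $f\in r_R(aR)$ together with $a=a\cdot 1\in aR$ gives $af=0$, so $a=0$. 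All steps check out, and you are right to flag the ordering: the step $a\in aR$ needs the unity, so it must come second. The net effect is that your proof buys independence from the external reference at the cost of a few extra lines, which is arguably preferable for a result this elementary.
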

\begin{proof} Since $R$ is a p.q.-Baer $*$-ring, there exists a
projection $e\in R$ such that $r_R(0R)=R=eR$. As $e \in eR$, we
have $e=er$ for some $r \in R$. Let $x \in R=eR$, so $x=er'$ for
some $r' \in R$. Clearly $ex=e(er')=e^2r'=er'=x$. Hence $e$ is
the left unity in $R$. Since $R$ is a $*$-ring, $e$ is also the
right unity in $R$. Thus $e$ is an unity in $R$. By \cite[Proposition 2]{Usa}, $*$ is semi-proper.
\end{proof}
\begin{remark}\label{rem01} Note that in a p.q.-Baer $*$-ring, the
projection which generates the right annihilator of a principal
right ideal is central.
 For this, let $k$ be a projection in a p.q.-Baer
$*$-ring $R$ such that $r_R(zR)=kR$ for some $z \in R$. Let $r \in
R$. Since $rk \in r_R(zR)=kR$, we have $rk=kr'$ for some $r' \in
R$. Hence $krk=kr'=rk$. Thus $kr=(r^*k)^*=(kr^*k)^*=krk=rk$.
\end{remark}
\begin{theorem} \label{th002'} Let $R$ be a p.q.-Baer $*$-ring and $x\in R$.
Then $x$ has a central cover $e \in R$. Further,
  $xRy=0$ if and only if $yRx=0$ if and only if $ey=0$.\\
That is $r_R(xR)=r_R(eR)=l_R(Rx)=l_R(Re)=(1-e)R=R(1-e)$.
\end{theorem}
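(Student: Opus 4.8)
The plan is to build the central cover explicitly from the projection that the p.q.-Baer hypothesis supplies. Proposition \ref{pr1} gives the unity $1$, and applying the definition of p.q.-Baer $*$-ring to the principal right ideal $xR$ produces a projection $f$ with $r_R(xR)=fR$; by Remark \ref{rem01} this $f$ is central. I would then set $e=1-f$ and claim $e=C(x)$. That $e$ is a projection follows from $f^{2}=f=f^{*}$, and $e$ is central because $f$ is. Since $f=f\cdot 1\in fR=r_R(xR)$ we get $xf=0$, and centrality of $f$ gives $fx=xf=0$, whence $ex=(1-f)x=x$. Thus $e$ is a central projection fixing $x$.

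For minimality, suppose $h$ is any central projection with $hx=x$. Then $x(1-h)=x-xh=x-hx=0$, and since $1-h$ is central, $xr(1-h)=x(1-h)r=0$ for all $r$, i.e. $1-h\in r_R(xR)=fR$. Because $z\in fR$ forces $fz=z$, we get $f(1-h)=1-h$, which rearranges (using $fh=hf$) to $h(1-f)=1-f$, that is $he=e$. Hence $e\le h$, and $e$ is the smallest central projection with $ex=x$, so the central cover $e=C(x)$ exists as claimed.

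The remaining content is the three-way equivalence together with the displayed chain of annihilator identities. The equivalence $xRy=0\iff ey=0$ is the easy half: $xRy=0$ says exactly $y\in r_R(xR)=fR$, and $y\in fR\iff fy=y\iff ey=(1-f)y=0$. The identities $r_R(eR)=(1-e)R=R(1-e)$ and $l_R(Re)=R(1-e)$ then follow purely from $e$ being a central projection, since for central $e$ one has $\{a:ea=0\}=\{a:ae=0\}=(1-e)R=R(1-e)$, and this set is $r_R(xR)=fR$ as well. I expect the main obstacle to be the remaining equivalence $yRx=0\iff ey=0$, because $yRx=0$ is a \emph{left}-handed condition that cannot be read off directly from $r_R(xR)$. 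My plan is to pass through the involution: $yRx=0$ gives $(yrx)^{*}=x^{*}r^{*}y^{*}=0$ for all $r$, hence $y^{*}\in r_R(x^{*}R)$. To translate this back into a statement about $x$, I would first establish $C(x)=C(x^{*})$: for a central projection $h$, applying $*$ to $hx=x$ gives $x^{*}h=x^{*}$, and centrality turns this into $hx^{*}=x^{*}$ (and conversely), so the two families of central projections, and hence their least elements, coincide. Applying the already-proved first part of the theorem to $x^{*}$ then gives $r_R(x^{*}R)=(1-C(x^{*}))R=(1-e)R=fR$, so $y^{*}\in fR$, i.e. $fy^{*}=y^{*}$; applying $*$ and centrality yields $fy=y$, that is $ey=0$. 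The converse $ey=0\Rightarrow yRx=0$ is a short computation from $fy=y$ and $xf=0$, namely $yrx=(fy)rx=(yrx)f=yr(xf)=0$. Assembling these identifies $l_R(Rx)$ with $fR=(1-e)R=R(1-e)$ and closes the full chain of equalities.
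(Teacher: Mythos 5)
Your proposal is correct, and its core construction coincides with the paper's: both take the projection $g$ with $r_R(xR)=gR$ supplied by the p.q.-Baer hypothesis (your $f$), observe centrality via Remark \ref{rem01}, set $e=1-g$, prove minimality by the same annihilator computation, and obtain $xRy=0\Leftrightarrow ey=0$ by unwinding membership in $gR$. The genuine divergence is in the left-handed equivalence $yRx=0\Leftrightarrow ey=0$. The paper asserts, ``similarly,'' the existence of a central projection $f$ with $fx=x$ and $yRx=0\Leftrightarrow yf=0$ (in effect dualizing the p.q.-Baer condition to left annihilators), and then proves $e=f$ by playing the two equivalences against each other: $xR(e-f)=0$ forces $e=ef$, while $(e-f)Rx=0$ forces $ef=f$. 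You instead route through the involution: you prove $C(x)=C(x^*)$ directly from the definition of central cover (a central projection fixes $x$ if and only if it fixes $x^*$, by applying $*$ and centrality), transfer $yRx=0$ to $y^*\in r_R(x^*R)$, and invoke the already-established first part of the theorem for $x^*$. Both routes are valid. Yours is more self-contained, since it uses only the right-annihilator form of the p.q.-Baer condition and makes explicit the involution argument that the paper hides behind the word ``similarly''; it also delivers $C(x)=C(x^*)$ --- which is the paper's Theorem \ref{th1}(1), proved there as a consequence of this very theorem --- ahead of time, by an argument independent of it. The paper's two-projection version is terser on the page and avoids the detour through $x^*$, at the cost of leaving the dual existence claim for the reader to justify.
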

\begin{proof} Let $R$ be a p.q.-Baer $*$-ring and $x\in R$.
First we prove that $x$ has a central cover. Since $R$ is a
p.q.-Baer $*$-ring, there exists a projection $g \in R$ such that
$r_R(xR)=gR$. Let $e=1-g$. Consider $xe=x(1-g)=x-xg=x$. By Remark
\ref{rem01}, $e$ is central. To prove $e$ is the smallest central
projection with the property $xe=x$, suppose $e'\in R$ be a
central projection such that $xe'=x$. So $x(1-e')=0$. Thus
$xR(1-e')=0$. Consequently $(1-e') \in r_R(xR)=(1-e)R$. It follows
that $1-e'= (1-e)t$ for some $t \in R$. This gives
$(1-e')(1-e)=(1-e)t=(1-e')$, that is $(1-e') \leq (1-e)$.
Therefore
 $e \leq e'$. Thus $e=C(x)$.\\
 \indent If $xRy=0$, then $y \in r_R(xR)=gR $. Hence $y=gr$ for some $r \in
R$. As $g$ is a projection, we have $gy=g^2r=gr=y$. Therefore
$ey=(1-g)y=y-gy=0$. Conversely, suppose $ey=0$. Then
$y=y-ey=(1-e)y=gy$. Since $g \in r_R(xR) $, we have $xry=xrgy=0$ for any
$r \in R$. This yields $xRy=0$.
 Similarly, there
exists a central projection $f \in R$ such that, $fx=x$; and
$yRx=0$ if and only if $yf=0$.\\
 \indent Now we prove that $e=f$. Since $xR(e-f)=0$, we have $e(e-f)=0$, that is
$e=ef$. Similarly, since $(e-f)Rx=0$, we have $(e-f)f=0$, that is,
$ef=f$. This yields $e=f$.
Therefore $xRy=0$ if and only if $yRx=0$ if and only if $ey=0$.\\
 \indent Observe that, $r_R(xR)=(1-e)R$ and
$l_R(Rx)=R(1-e)$. As $e$ is central, $1-e$ is also central.
Consequently, $r_R(xR)=l_R(Rx)=R(1-e)$. To prove $r_R(eR)=R(1-e)$,
let $z \in r_R(eR)$. Then $eRz=0$, so
 $ez=0=ze$. Hence $z=z(1-e)\in R(1-e)$. Thus
$r_R(eR)\subseteq R(1-e)$. Since $1-e$ is central, we have $R(1-e)
\subseteq r_R(eR) $. This gives $r_R(eR)=R(1-e)$. In nutshell, we
get $r_R(xR)=r_R(eR)=l_R(Rx)=l_R(Re)=(1-e)R=R(1-e)$.
\end{proof}
 The
following example shows that $RP$'s in Rickart $*$-rings are not
necessarily central covers in p.q.-Baer $*$-rings.
\begin{example}\label{ex01} Let $A=M_2$($\mathbb{Z}_3$), which is a Baer $*$-ring (hence a p.q.-Baer $*$-ring and a Rickart $*$-ring) with transpose
as an involution. By \cite[Exercise 17, page 10]{Ber}, the set of
all projections $P(A)$ in $A$ is, $P(A)=\{0, 1, e, f, 1-e, 1-f
\}$, where $e= \left(
\begin{array}{cc}
  1 & 0 \\
  0 & 0 \\
\end{array}
\right)$,
 $f= \left(
\begin{array}{cc}
  2 & 2 \\
  2 & 2 \\
\end{array}
\right)$.
 Out of these only $0$ and $1$ are central projections in
$A$. Note that the set of all right projections in $A$ is $P(A)$,
whereas the set of all central covers  in $A$ (being central) is
$\{0, 1\}$. Therefore there is an element $x \in A$ such that
$RP(x)$ is not equal to $C(y)$ for any $y \in A$. In particular,
$r_A(x) \neq r_A(xA)$ for some $x \in A$.
\end{example}
Now we provide properties of a central cover of an element in
p.q.-Baer $*$-rings.
\begin{theorem} \label{th1} If $R$ is a p.q.-Baer $*$-ring, then\\
(1) $C(x)=C(x^*)$; (2) $xRy=0$ if and only if $C(x)C(y)=0$.
\end{theorem}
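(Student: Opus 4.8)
The plan is to lean entirely on Theorem~\ref{th002'}, which already supplies, for each $x \in R$, a central cover $e = C(x)$ that is a central projection satisfying both $xe = x$ and $ex = x$ (recall that in the proof of Theorem~\ref{th002'} the two central projections $e$ and $f$ with $xe = x$ and $fx = x$ were shown to coincide), together with the characterization that $xRy = 0$ if and only if $C(x)y = 0$.

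For part (1), I would first observe that $e = C(x)$ is a central projection with $xe = x = ex$. Applying the involution to these identities and using $e^* = e$ gives $x^* e = x^* = e x^*$, so $e$ is a central projection fixing $x^*$ on both sides. By the minimality of $C(x^*)$ this yields $C(x^*) \leq C(x)$. Running the identical argument with the roles of $x$ and $x^*$ interchanged, starting from the central projection $C(x^*)$ and taking adjoints, gives $C(x) \leq C(x^*)$. The two inequalities force $C(x) = C(x^*)$.

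For part (2), write $e = C(x)$ and $g = C(y)$. For the forward implication, Theorem~\ref{th002'} turns $xRy = 0$ into $ey = 0$; since $e$ is central this rearranges to $y(1-e) = y$, exhibiting the central projection $1-e$ as fixing $y$ on the right. Minimality of $g = C(y)$ then gives $g \leq 1-e$, i.e. $eg = 0$, which is $C(x)C(y) = 0$. For the converse, from $eg = 0$ together with $gy = y$ I would compute $ey = (eg)y = 0$ and read off $xRy = 0$ once more via Theorem~\ref{th002'}.

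No step presents a genuine obstacle; the only point demanding care is the bookkeeping in part (1), where one must invoke the two-sided fixing property $ex = xe = x$ (rather than just $xe = x$), so that taking adjoints produces a projection fixing $x^*$ on both sides and hence eligible to be compared with $C(x^*)$ under the minimality condition in both directions.
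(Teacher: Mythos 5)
Your proof is correct, and while it rests on the same key input as the paper (Theorem~\ref{th002'}), the internal mechanics are genuinely different. For part (1), the paper sets $e=C(x^*)$, $f=C(x)$, forms the difference $e-f$, derives $(e-f)Rx=0$ and $x^*R(e-f)=0$, and then invokes the annihilator clause of Theorem~\ref{th002'} twice to get $ef=f$ and $ef=e$; you instead never touch annihilators and argue purely from minimality: $C(x)$ is a central projection fixing $x^*$ (after taking adjoints), so $C(x^*)\leq C(x)$, and symmetrically, whence equality by antisymmetry of $\leq$. For part (2), the paper's forward direction applies Theorem~\ref{th002'} a second time, to the element $y$ (using the symmetry $eRy=0 \Leftrightarrow yRe=0 \Leftrightarrow C(y)e=0$), whereas you apply it only to $x$ and then use minimality of $C(y)$ against the central projection $1-e$ (legitimate, since $R$ has unity by Proposition~\ref{pr1}). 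What your route buys is economy: you need only the existence of central covers and the single equivalence $xRy=0 \Leftrightarrow C(x)y=0$, never the two-sided symmetry clause of Theorem~\ref{th002'}, and your argument would port directly to the weakly p.q.-Baer setting of Section 3 where that is exactly the defining data (modulo the use of $1-e$, which you could replace by noting $ey=0$ and $C(y)y=y$ give $e\,C(y)\,y\cdot R$ annihilation, or by the paper's own device). The paper's route, by contrast, exercises the full annihilator formalism it has just built, which is the pattern reused in its later sections. Your care about the two-sided fixing property $ex=xe=x$ is appropriate but in fact automatic: the central cover is central by definition, so one-sided fixing already implies two-sided.
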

\begin{proof} (1): Let $e=C(x^*)$ and $f=C(x)$.
 By the definition of central cover, $x^*e=x^*$ and hence $ex=x=fx$. 
 Therefore $ex-fx=0$. This gives $rex-rfx=0$ for all $r \in R$.
 Since $e$ and $f$ are central, $erx-frx=0$. 
 That is $(e-f)rx=0$ for all $r \in R$. It follows that $(e-f)Rx=0$. By Theorem \ref{th002'}, we have
 $(e-f)f=0$. Hence $ef=f$. Similarly, if
 $fx=x$ then $x^*f=x^*=x^*e$, this yields $ef=e$. Thus $e=f$.
\vspace{3mm} \\
  (2): Let $e=C(x)$ and $f=C(y)$. If $xRy=0$, then $ey=0$.
 As $e$ is central, $eRy=0$. Hence $ef=0$. Conversely, suppose
 $ef=0$. We have $eRy=0$, that is, $ey=0$. Thus $xRy=0$.
 \end{proof}
   Projections $e,~f$ in a p.q.-Baer $*$-ring are
called {\it very orthogonal} if $C(e)C(f)=0$ (equivalently, in view of
Theorem \ref{th1}, $eRf=0$).
\begin{corollary} \label{cr3} Let $R$ be a p.q.-Baer $*$-ring whose
only central projections are $0$ and $1$. If $x, y \in R$, then
$xRy=0$ if and only if $x=0$ or $y=0$.
\end{corollary}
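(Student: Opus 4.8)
The plan is to invoke Theorem \ref{th1}(2), which already characterizes the condition $xRy=0$ in terms of central covers, and then exploit the hypothesis that the only central projections are $0$ and $1$. The statement is an ``if and only if,'' so I would prove each direction separately, but I expect the substantive content to lie entirely in the forward implication.

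For the reverse direction, suppose $x=0$ or $y=0$. If $x=0$ then $xRy=0Ry=0$ trivially, and symmetrically if $y=0$; so this direction requires no machinery beyond the observation that $0$ annihilates everything.

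For the forward direction, assume $xRy=0$. By Theorem \ref{th1}(2), this is equivalent to $C(x)C(y)=0$. Now $C(x)$ and $C(y)$ are central projections in $R$, and by hypothesis each of them is either $0$ or $1$. The product $C(x)C(y)$ therefore equals $1$ precisely when both factors are $1$, and equals $0$ otherwise; since we have $C(x)C(y)=0$, at least one factor must be $0$. Without loss of generality say $C(x)=0$. It then remains to deduce that $x=0$ from $C(x)=0$: by the definition of central cover, $C(x)$ is the smallest central projection $h$ with $hx=x$, so $C(x)=0$ forces $0\cdot x=x$, i.e. $x=0$. Symmetrically $C(y)=0$ gives $y=0$, completing the argument.

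The only point requiring a moment's care is the last step, namely that $C(x)=0$ implies $x=0$; this is immediate from the defining property $C(x)\,x=x$ of the central cover (guaranteed to exist by Theorem \ref{th002'}). I do not anticipate a genuine obstacle here: the corollary is essentially a direct specialization of Theorem \ref{th1}(2) to the case where the Boolean algebra of central projections is trivial, so the whole proof reduces to the elementary fact that in the two-element Boolean algebra $\{0,1\}$ a product vanishes if and only if one of the factors vanishes.
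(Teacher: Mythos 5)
Your proposal is correct and takes essentially the same route as the paper: both reduce the corollary to Theorem \ref{th1}(2) together with the triviality of the set of central projections, the only cosmetic differences being that you argue the forward direction directly (the paper does it by contrapositive) and you dispose of the reverse direction by the trivial observation $0\cdot R\cdot y=0$ rather than invoking the theorem again. Your explicit justification that $C(x)=0$ forces $x=0$ (via $x=C(x)x$) is the same fact the paper uses implicitly when it asserts $x\neq 0$ implies $C(x)=1$.
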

\begin{proof} If $x \neq 0$ and $y \neq 0$, then $C(x)=C(y)=1$.
Therefore $C(x) C(y) \neq 0$. By Theorem \ref{th1}, $xRy \neq 0$.
If $x=0$ or $y=0$ then by the definition of a central cover, we have
$C(x)=0$ or $C(y)=0$. That is $C(x)C(y)=0$. So by Theorem
\ref{th1}, $xRy=0$.
\end{proof}
According to  \cite{Usa}, a $*$-ring $R$ is said to satisfy the {\it $*$-Insertion of Factors Property}
(simply, $*$-$IFP$) if $ab = 0$ implies $aRb^* = 0$ for all $a, b \in R$.
\begin{corollary} Let $R$ be a p.q.-Baer $*$-ring.
If $C(xy)=C(x)C(y)$ for all $x,y \in R$, then $R$ is a Rickart $*$-ring.
Moreover $RP(x)=C(x)$ for all $x \in R$. 
\end{corollary}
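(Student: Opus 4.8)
The plan is to reduce both assertions to a single annihilator computation: I want to show that for every $x \in R$ one has $r_R(\{x\}) = (1-C(x))R$, with $1-C(x)$ a projection. Once this is in hand, the right annihilator of each singleton is generated by a projection, so $R$ is a Rickart $*$-ring; and since $C(x)$ is a projection with $xC(x)=x$ and $r_R(\{x\})=(1-C(x))R=r_R(\{C(x)\})$, it satisfies the defining properties of the right projection, so uniqueness of $RP(x)$ forces $RP(x)=C(x)$. Note $R$ has unity by Proposition \ref{pr1}, and by Theorem \ref{th002'} we already know $r_R(xR)=(1-C(x))R$ with $C(x)$ central; since $\{x\}\subseteq xR$, the annihilator $r_R$ being order-reversing gives $r_R(xR)\subseteq r_R(\{x\})$. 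Thus the whole corollary reduces to the reverse inclusion $r_R(\{x\})\subseteq(1-C(x))R$, equivalently the implication $xa=0\Rightarrow xRa=0$.

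First I would dispose of the easy inclusion $(1-C(x))R\subseteq r_R(\{x\})$ directly: writing $a=(1-C(x))b$ and using $xC(x)=x$ (valid because $C(x)$ is the central cover of $x$, hence central with $C(x)x=x$), one gets $xa=(x-xC(x))b=0$.

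For the reverse inclusion I would invoke the multiplicativity hypothesis. Suppose $xa=0$. Then $C(xa)=C(0)=0$, and the assumption $C(xy)=C(x)C(y)$ yields $C(x)C(a)=0$. Since $C(a)a=a$ and the central covers are central, I can regroup to obtain $C(x)a=C(x)(C(a)a)=(C(x)C(a))a=0$, whence $a=(1-C(x))a\in(1-C(x))R$. Combining the two inclusions gives $r_R(\{x\})=(1-C(x))R$, completing both claims as described above.

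The step I expect to be the crux is exactly the implication $xa=0\Rightarrow C(x)a=0$: this is the only place where the hypothesis $C(xy)=C(x)C(y)$ is truly indispensable, and it is precisely the property that fails in a general p.q.-Baer $*$-ring. Indeed Example \ref{ex01} exhibits an element with $r_A(x)\neq r_A(xA)$, so without the multiplicativity assumption $r_R(\{x\})$ need not be generated by a projection at all; the multiplicativity is what collapses the gap between $r_R(\{x\})$ and $r_R(xR)$.
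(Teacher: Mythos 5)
Your proof is correct, but it takes a genuinely different route from the paper's. The paper argues via the $*$-Insertion of Factors Property: from $xy=0$ it deduces $C(x)C(y)=0$, then uses $C(y)=C(y^*)$ together with Theorem \ref{th1} to conclude $xRy^*=0$, so that $R$ satisfies $*$-$IFP$; it then invokes the external result \cite[Proposition 9]{Usa} to conclude that $R$ is reduced and hence a Rickart $*$-ring, and only remarks at the end that $r_R(x)=r_R(xR)$, whence $RP(x)=C(x)$. You instead establish the identity $r_R(\{x\})=(1-C(x))R$ by a direct two-inclusion computation: the inclusion $\supseteq$ from $xC(x)=x$, and the inclusion $\subseteq$ from $xa=0\Rightarrow C(x)C(a)=0\Rightarrow C(x)a=C(x)C(a)a=0$. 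This is self-contained — it needs only Proposition \ref{pr1} (unity) and Theorem \ref{th002'} (existence and centrality of central covers, and $r_R(xR)=(1-C(x))R$), not Theorem \ref{th1}, not the identity $C(y)=C(y^*)$, and not the external citation — and it delivers both conclusions at once, since $1-C(x)$ is a projection generating $r_R(\{x\})$, and uniqueness of the right projection then forces $RP(x)=C(x)$. What the paper's route buys in exchange is extra structural information ($R$ satisfies $*$-$IFP$ and is reduced), which your argument does not produce; what your route buys is elementarity and an explicit proof of the ``moreover'' clause, which the paper leaves as an unproved observation — indeed your collapse of $r_R(\{x\})$ onto $r_R(xR)$ is exactly the content of that observation, and your closing remark tying the failure of this collapse to Example \ref{ex01} correctly identifies why the multiplicativity hypothesis cannot be dropped.
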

\begin{proof} Let $x,y \in R$ be such that $xy=0$. This gives $C(xy)=0$.
        By assumption, we have $C(x) C(y)=0$. 
       Clearly $C(y)=C(y^*)$, therefore $C(x) C(y^*)=0$. By Theorem \ref{th1}, $xRy^*=0$.
        Therefore $R$ satisfies $*$-$IFP$.  By \cite[Proposition 9]{Usa}, $R$ is reduced and hence a Rickart $*$-ring.
        Also, observe that $r_R(x)=r_R(xR)$ and hence $RP(x)=C(x)$ for all $x \in R$.  
\end{proof}  
\begin{proposition} \label{pr4} Let $R$ be a p.q.-Baer $*$-ring
and suppose $(e_i)$ is a family of projections that has a supremum, say
$e$. If $x\in R$, then $xRe=0$ if and only if $xRe_i=0$ for all
$i$.
\end{proposition}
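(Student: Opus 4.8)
The plan is to reduce both directions of the equivalence to a single condition on the central cover of $x$, and then to exploit that the supremum $e$ is the \emph{least} upper bound of the family. Throughout I write $h = C(x)$, which exists by Theorem \ref{th002'} and is a central projection; the part of that theorem I will lean on is that $xRy = 0$ if and only if $hy = 0$ for every $y \in R$.

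For the forward implication I would argue directly from the order relation. Assuming $xRe = 0$, each $e_i \le e$ gives $e_i = e_i e$, and taking adjoints yields $e_i = e e_i$; hence $x r e_i = (xre)e_i = 0$ for all $r \in R$, so $xRe_i = 0$ for every $i$. This half uses only that $e$ is an upper bound of the family, not that it is the least one.

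The substantive direction is the converse, and here I would translate into central covers. Assuming $xRe_i = 0$ for all $i$, Theorem \ref{th002'} turns this into $h e_i = 0$ for every $i$. Since $h$ is central, $e_i h = h e_i = 0$, whence $e_i(1-h) = e_i - e_i h = e_i$, that is $e_i \le 1-h$. Thus the central projection $1-h$ is an upper bound for the entire family $(e_i)$. Because $e$ is the \emph{least} upper bound, it follows that $e \le 1-h$, giving $e(1-h) = e$ and hence $he = 0$. A final application of Theorem \ref{th002'} converts $he = 0$ back into $xRe = 0$, completing the argument.

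The one step where the hypothesis genuinely bites --- and which I expect to be the crux --- is the inference from ``$1-h$ dominates every $e_i$'' to ``$1-h$ dominates $e$''; this is precisely where the least-upper-bound property of the supremum enters, and it is exactly what the forward direction (needing $e$ only as an upper bound) does not require. Everything else is a mechanical back-and-forth between the annihilator conditions and the central cover $h = C(x)$ supplied by Theorem \ref{th002'}, so no separate obstacle should arise.
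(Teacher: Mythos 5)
Your proof is correct and follows essentially the same route as the paper's: both hinge on Theorem \ref{th002'} translating $xRy=0$ into a condition on $C(x)$, and both use the least-upper-bound property exactly where you identify it, to pass from ``$1-C(x)$ dominates every $e_i$'' to ``$1-C(x)$ dominates $e$'' (a step the paper leaves implicit as ``reversing the steps''). Your forward direction is in fact a slight simplification --- the direct computation $xre_i=(xre)e_i=0$ avoids the central cover, which the paper invokes even for that easy half --- but this is a minor variation, not a different method.
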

\begin{proof} Let $x \in R$ and $C(x)=e'$. Suppose $xRe=0$.
As $C(x)=e'$, $e'e=0$, which yields $e(1-e')=e$. Therefore $e \leq
(1-e')$. Since $e_i\leq e$ for all $i$, we have $e_i \leq (1-e')$
for all $i$. Hence $e_i(1-e')=e_i$. This gives
$e_ie'=0$, that is, $e'e_i=0$ for all $i$. As $C(x)=e'$, $xRe_i=0$
for all $i$. By reversing the steps, we get the converse part.
\end{proof}
Let $R$ be a ring and
$S\subseteq R$ be nonempty, then $S'=\{x \in R ~|~xs=sx,~ \forall ~
s\in S \}$. A subring $B$ of a $*$-ring $R$ is said to be $*$-subring, if $x\in B$ imply $x^*\in B$.
\begin{theorem} \label{th7} Let $R$ be a p.q.-Baer $*$-ring and
$B$ be a $*$-subring of $R$ such that $B=(B')'$. If $(e_i)$ is a
family of central projections in $B$ that possesses a supremum, say $e
\in R$, then $e \in B$.
\end{theorem}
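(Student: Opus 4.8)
The plan is to show directly that $e$ lies in the bicommutant $B=(B')'$, i.e. that $e$ commutes with every element of $B'$. First observe that, since $B$ is a $*$-subring, so is $B'$: if $y\in B'$ and $b\in B$ then $b^*\in B$, so $yb^*=b^*y$, and taking adjoints gives $by^*=y^*b$; hence $y^*\in B'$. Now fix $y\in B'$. Because each $e_i$ belongs to $B=(B')'$, it commutes with every element of $B'$, so $ye_i=e_iy$ and $y^*e_i=e_iy^*$ for all $i$. Thus the entire problem reduces to proving $ye=ey$ for an arbitrary $y\in B'$.

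Next I would follow the pattern of the corresponding Baer $*$-ring argument. Since $e_i\le e$ we have $(1-e)e_i=0$, so the element $x=(1-e)y$ satisfies $xe_i=(1-e)ye_i=(1-e)e_iy=0$ for every $i$ (and likewise $e_ix=0$, with the same holding for $y^*$ in place of $y$). I would then try to conclude from $e=\sup_i e_i$ that $xe=0$, that is $(1-e)ye=0$, equivalently $ye=eye$. Running the identical argument with $y^*$ gives $(1-e)y^*e=0$, and taking adjoints yields $ey=eye$. Combining the two identities gives $ye=eye=ey$, the commutation we need; as $y\in B'$ was arbitrary, this places $e$ in $(B')'=B$. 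Throughout, the unity furnished by Proposition \ref{pr1} lets me pass freely between an equation such as $(1-e)ye=0$ and its one-sided reformulations.

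The crux is the single implication \(xe_i=0\ \text{for all }i\ \Longrightarrow\ xe=0\), where $x=(1-e)y$ and $e=\sup_i e_i$. In a Baer (or merely Rickart) $*$-ring this is immediate: the right annihilator $r_R(\{x\})$ is generated by a projection $g$, so $e_i\le g$ for all $i$, whence $e=\sup_i e_i\le g$ and $xe=0$. A p.q.-Baer $*$-ring, however, need not be Rickart, so $r_R(\{x\})$ need not be projection-generated, and this is exactly where the difficulty lies. To repair it I would exploit the hypothesis that the $e_i$ are \emph{central} in $B$ together with the central-cover machinery developed above: the aim is to upgrade the pointwise relation $xe_i=0$ to the principal-ideal relation $xRe_i=0$ (equivalently $C(x)e_i=0$, by Theorem \ref{th002'}), after which Proposition \ref{pr4} applies verbatim to give $xRe=0$ and hence $xe=0$. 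Should the commutator resist this upgrade, the alternative route is to show that $t=ye-ey$ satisfies $tRt^*=0$ and then invoke the semi-properness of the involution (Proposition \ref{pr1}) to conclude $t=0$. Bridging pointwise annihilation to ideal-level annihilation — so that the central cover, which governs annihilators of principal ideals rather than of single elements, can detect the supremum — is the main obstacle.
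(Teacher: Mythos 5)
Your setup coincides with the paper's proof, in mirror image: the paper computes $e_i(y-ye)=0$ and pushes annihilation from the family to the supremum on the left, while you work with $x=(1-e)y$ and $xe_i=0$ on the right; the reduction to showing that $e$ commutes with every $y\in B'$, and the adjoint trick that turns $(1-e)y^*e=0$ into $ey=eye$, are exactly the paper's. But your write-up stops at the decisive step: you state the needed upgrade from the pointwise relation $xe_i=0$ to the ideal relation $xRe_i=0$ only as an ``aim,'' hedge with an undeveloped fallback ($tRt^*=0$ plus semi-properness), and close by calling it the main obstacle. As it stands this is a gap, not a proof.

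What you are missing is that the upgrade is a one-line consequence of the centrality hypothesis, and this is precisely where the paper spends that hypothesis: since each $e_i$ is a central projection, $xre_i=xe_ir=0$ for every $r\in R$, so $xe_i=0$ already gives $xRe_i=0$; then Proposition \ref{pr4} yields $xRe=0$, and $xe=x1e=0$ because $R$ has a unity (Proposition \ref{pr1}). (The paper's left-handed form is the same move: from $e_i(y-ye)=0$ and centrality, $e_ir(y-ye)=re_i(y-ye)=0$, and then Proposition \ref{pr4} combined with the symmetry $aRb=0\Leftrightarrow bRa=0$ of Theorem \ref{th002'} gives $eR(y-ye)=0$.) One caveat that your hesitation correctly detects: this argument needs each $e_i$ to commute with all of $R$, not merely with $B$; the paper's proof explicitly invokes ``each $e_i$ is central'' in this strong sense, so the hypothesis ``central projections in $B$'' must be read as projections of $B$ that are central in $R$. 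Granting that reading, the obstacle you flag dissolves, and your argument closes exactly along the paper's lines.
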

\begin{proof} Since $e_i \in B=(B')' $, we have $e_iy=ye_i$ for
all $y \in B'$. Clearly
$e_i(y-ye)=e_iy-e_iye=e_iy-ye_ie=e_iy-ye_i$ (as $e_i \leq e$).
Since $e_iy=ye_i$, we get $e_i(y-ye)=0$. Moreover, each $e_i$ is central,
$e_iR(y-ye)=0$. By Proposition \ref{pr4}, $e R(y-ye)=0$. Hence
$e(y-ye)=0$, that is $ey=eye$ for all $y \in B'$. If $y \in B'$,
then $y^* \in B'$. Therefore $ey^*=ey^*e$, which gives $ye=eye$.
Consequently, $ey=ye$ for all $y \in B'$. Thus $e \in (B')'=B$.
\end{proof}

\section{Weakly $p.q.$-Baer $*$-rings}

In this section, we introduce weakly p.q.-Baer $*$-rings.
 By Proposition \ref{pr1}, p.q.-Baer $*$-rings has unity. What
 about the unit-less case? The answer, in principle, is to modify
 the arguments or attempt to adjoin the unity element. First, let us
 review the unit case.
\begin{proposition} \label{pr301} A $*$-ring $R$ is a p.q.-Baer
$*$-ring if and only if \\
(a) $R$ has the unity element.\\
(b) For each $x \in R$, there exists a central projection $e\in R$
such that $r_R(xR)=r_R(eR)$.
\end{proposition}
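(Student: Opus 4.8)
The plan is to prove the two implications separately, observing that the forward direction is almost immediate from the results already established in Section 2, while the converse carries the real content. Throughout, the recurring engine is the identity $r_R(eR) = (1-e)R$ for a central projection $e$, which already appears inside the proof of Theorem \ref{th002'}.

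For the forward implication, suppose $R$ is a p.q.-Baer $*$-ring. Condition (a) is exactly Proposition \ref{pr1}, which guarantees that $R$ has a unity. For condition (b), I would invoke Theorem \ref{th002'}: given $x \in R$, that theorem produces the central cover $e = C(x)$ and asserts the chain of equalities $r_R(xR) = r_R(eR) = (1-e)R$. In particular $e$ is a central projection satisfying $r_R(xR) = r_R(eR)$, which is precisely (b), so this direction requires no new computation.

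For the converse, assume (a) and (b) and fix $x \in R$. By (b) choose a central projection $e$ with $r_R(xR) = r_R(eR)$. The goal is to exhibit a projection $g$ with $r_R(xR) = gR$, and the natural candidate is $g = 1 - e$, which is available because (a) supplies the unity; since the unity is self-adjoint one checks $(1-e)^* = 1-e$ and $(1-e)^2 = 1-e$, so $g$ is indeed a projection. The key step is then to verify $r_R(eR) = (1-e)R$. For $z \in r_R(eR)$ we have $eRz = 0$; taking the factor $1 \in R$ gives $ez = 0$, whence $z = (1-e)z \in (1-e)R$. Conversely, if $z = (1-e)w$ for some $w \in R$, then $ez = e(1-e)w = 0$, and since $e$ is central $erz = rez = 0$ for every $r \in R$, so $eRz = 0$ and $z \in r_R(eR)$. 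Combining these, $r_R(xR) = r_R(eR) = (1-e)R = gR$, so $R$ is a p.q.-Baer $*$-ring.

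I expect no serious obstacle here; the proposition is essentially a repackaging of Theorem \ref{th002'}. The one point demanding care is that both hypotheses are genuinely needed in the converse: centrality of $e$ is what lets $ez = 0$ propagate to $eRz = 0$, so that $r_R(eR)$ collapses to the annihilator of the single element $e$, while the unity is what lets this annihilator be rewritten as the \emph{principal} right ideal $(1-e)R$ rather than merely as an annihilator set. This is exactly the subtlety the proposition is designed to isolate before the passage to the unit-less, weakly p.q.-Baer setting.
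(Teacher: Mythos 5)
Your proposal is correct and follows essentially the same route as the paper: the forward direction cites Proposition \ref{pr1} and Theorem \ref{th002'}, and the converse establishes $r_R(eR)=(1-e)R$ by the same two inclusions (using centrality of $e$ for $(1-e)R\subseteq r_R(eR)$ and the unity to get $ez=0$ for the reverse), concluding $r_R(xR)=gR$ with $g=1-e$. The only difference is that you spell out details the paper leaves implicit, such as $1^*=1$ and the role of the unity in extracting $ez=0$ from $eRz=0$.
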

\begin{proof}  If $R$ is a p.q.-Baer
$*$-ring, then by Proposition \ref{pr1}, $R$ has the unity element
and by Theorem \ref{th002'}, the condition $(b)$ is satisfied.
Conversely suppose that the conditions $(a)$ and $(b)$ are satisfied. Since $e$
is a central projection, $1-e$ is a central projection. So $(1-e)R
\subseteq r_R(eR)$. Let $y \in r_R(eR)$. Then $ey=0$. Hence
$y=y-ey=(1-e)y$, so $y \in (1-e)R$. Therefore $r_R(eR) \subseteq
(1-e)R$. This yields $r_R(xR)=r_R(eR) = (1-e)R$. Thus $R$ is a
p.q.-Baer $*$-ring.
\end{proof}
 The following example shows that the condition $(a)$ in Proposition \ref{pr301} is necessary.
\begin{example} [Exercise 1, page 32, \cite{Ber}] Let $R$ be a $*$-ring with
$R^2=\{0\}$. Then $0$ is the only projection in $R$ and
$r_R(xR)=r_R(0R)=R$ for every element $x \in R$, but $x 0 \neq x$,
when $x \neq 0$.
\end{example}
 To study the unit-less case, we introduce the concept of weakly p.q.-Baer $*$-rings as follows.
\begin{definition} A $*$-ring $R$ is said to be a weakly
p.q.-Baer $*$-ring, if every $x\in R$ has a central cover $e \in
R$ such that,
  $xRy=0$  if and only if $ey=0$.
  \end{definition}
  The following is an example of an abelian
p.q.-Baer $*$-ring which is not a Rickart $*$-ring, see \cite{Kim}.
\begin{example}\label{ex001} Let $ R={\displaystyle\left\{\begin{bmatrix}  a  & b \\  c &   d   \end{bmatrix} \in M_2(\mathbb Z)~|~
                 a \equiv d,~b \equiv 0,~ {\rm and} ~c \equiv 0 ~(mod~2)\right \}}$.
                 Consider  involution $*$ on $R$ as the transpose of the matrix.
                 In \cite[Example 2(1)]{Kim}, it is shown that $R$ is neither right p.p. nor left
                 p.p. (hence not a Rickart $*$-ring) but
                 $r_R(uR) = \{0\}=0R$ for any nonzero element $u \in
R$. Therefore $R$ is a p.q.-Baer $*$-ring.
\end{example}
{\bf Note that the ring $R$ in above example is weakly p.q.-Baer $*$-ring but not weakly Rickart $*$-ring 
(as $R$ has unity).}

  By \cite[Exercise 6, page 32]{Ber}, a weakly Rickart
  $*$-ring with finitely many elements is a Baer $*$-ring.
  We give an example of a p.q.-Baer $*$-ring (hence a weakly p.q.-Baer
  $*$-ring) with finitely many elements which is not a Baer
  $*$-ring. First, we recall the following corollary.
\begin{corollary} [\cite{Tha}, Corollary 7] \label{c031'} (i) $M_n(\mathbb Z_m)$ is a Baer
$*$-ring for $n \geq 2$ if and only if $n=2$ and $m$ is a square
free integer whose every prime factor is of the form $4k+3$.\\
(ii) $\mathbb Z_m$ is a Baer $*$-ring if and only if $m$ is a
square free integer.
\end{corollary}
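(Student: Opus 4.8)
The plan is to reduce both parts to the prime case via the Chinese Remainder Theorem and the fact that the Baer (and Baer $*$) property passes through finite direct products, and then to separate the two ingredients a Baer $*$-ring must carry: being Baer as a plain ring, and having a proper involution. For part (ii), writing $m=\prod_i p_i^{a_i}$ gives $\mathbb{Z}_m\cong\prod_i\mathbb{Z}_{p_i^{a_i}}$, and since a finite product is Baer iff each factor is, it suffices to analyze $\mathbb{Z}_{p^a}$. This is a local ring, so its only idempotents are $0$ and $1$; hence every annihilator must be $0$ or the whole ring. When $a\geq 2$ the element $p$ has annihilator $p^{a-1}\mathbb{Z}_{p^a}$, which is proper and nonzero, so $\mathbb{Z}_{p^a}$ fails to be Baer, whereas when $a=1$ it is a field and trivially Baer. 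Thus $\mathbb{Z}_m$ is Baer exactly when every $a_i=1$, i.e.\ $m$ is square free. (With the identity involution every idempotent is self-adjoint, so ``Baer'' and ``Baer $*$'' coincide here.)

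For part (i) I would first record that $M_n(R)$ is Baer as a ring iff $R$ is (Morita invariance of the Baer property), so by part (ii) the ring-theoretic Baer requirement already forces $m$ to be square free; then $M_n(\mathbb{Z}_m)\cong\prod_i M_n(\mathbb{Z}_{p_i})$ reduces everything to prime fields. The remaining content is to pin down when the transpose involution is admissible, and here the key point is that a Baer $*$-ring, being in particular a Rickart $*$-ring, must have a proper involution: $A^{*}A=0\Rightarrow A=0$. Reading off the diagonal of $A^{*}A$, whose $(i,i)$ entry is $\sum_k a_{ki}^{2}$, properness of the transpose on $M_n(\mathbb{Z}_p)$ is equivalent to the quadratic form $x_1^2+\cdots+x_n^2$ being anisotropic over $\mathbb{Z}_p$: a nonzero isotropic vector placed in a single column yields a nonzero $A$ with $A^{*}A=0$, and conversely anisotropy forces each column of such an $A$ to vanish.

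Next I would carry out the arithmetic of anisotropy. For $n\geq 3$ the form $x_1^2+\cdots+x_n^2$ has more variables than its degree, so by Chevalley--Warning it is isotropic over every $\mathbb{Z}_p$ (for instance $1^2+1^2+1^2\equiv 0 \pmod 3$ and $1^2+3^2+2^2\equiv 0\pmod 7$); hence the transpose involution is never proper and $M_n(\mathbb{Z}_m)$ is never a Baer $*$-ring when $n\geq 3$. For $n=2$ the form $x^2+y^2$ is anisotropic over $\mathbb{Z}_p$ iff $-1$ is a non-square, i.e.\ iff $p\equiv 3\pmod 4$ (and it is isotropic over $\mathbb{Z}_2$ since $1^2+1^2\equiv 0$); passing to the product, anisotropy over $\mathbb{Z}_m$ holds iff every prime factor of $m$ is of the form $4k+3$. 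This isolates exactly the stated arithmetic condition and explains the restriction $n=2$.

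Finally, for sufficiency I would check that these necessary conditions are also sufficient: when $n=2$ and $p\equiv 3\pmod 4$, the ring $M_2(\mathbb{Z}_p)$ is simple Artinian, hence von Neumann regular, and together with a proper involution it is $*$-regular; a $*$-regular ring is a Rickart $*$-ring, and since it is also Baer as a ring it is a Baer $*$-ring. Taking the finite product over the primes dividing $m$ then preserves the Baer $*$ property, giving the full equivalence. The step I expect to be the main obstacle is precisely this last upgrade from ``Baer ring with proper involution'' to ``Baer $*$-ring'' --- that is, replacing the idempotent generating an annihilator by an actual projection --- which is where the $*$-regular machinery (rather than bare ring theory) is genuinely needed; the rest is the Chinese Remainder reduction together with the elementary count of isotropic vectors of $\sum x_i^2$ over $\mathbb{Z}_p$.
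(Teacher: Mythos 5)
The paper itself contains no proof of this statement: it is imported verbatim from Thakare and Waphare \cite{Tha}, Corollary 7, so there is no internal argument to compare yours against, and I can only judge your proposal on its own terms. Your overall strategy is the natural one and is essentially sound: the CRT reduction, the observation that a Baer $*$-ring is in particular a Rickart $*$-ring and hence has a proper involution, the translation of properness of the transpose into anisotropy of $x_1^2+\cdots+x_n^2$, Chevalley--Warning for $n\geq 3$, the criterion $p\equiv 3 \pmod 4$ for $n=2$, and the final upgrade ``Baer ring $+$ Rickart $*$-ring $\Rightarrow$ Baer $*$-ring'' via $*$-regularity are all correct steps.

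The one genuine flaw is the appeal to ``Morita invariance of the Baer property.'' The Baer condition is \emph{not} Morita invariant (quasi-Baer is): for example $\mathbb{Z}[x]$ is Baer, being a commutative domain, but $M_2(\mathbb{Z}[x])$ is not; for a commutative domain $R$, $M_n(R)$ with $n\geq 2$ is Baer only when $R$ is Pr\"ufer. So your assertion that ``the ring-theoretic Baer requirement already forces $m$ to be square free'' is not justified as written. It can be repaired in either of two ways. First, the center of a Baer ring is a Baer ring (a theorem of Kaplansky \cite{Kap}), and the center of $M_n(\mathbb{Z}_m)$ is the scalar copy of $\mathbb{Z}_m$, so part (ii) applies. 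Second, and more in the spirit of your own argument, you do not need the Baer-ring condition at all for this direction, because properness already forces square-freeness: if $p^2\mid m$, then $x=m/p$ satisfies $x\not\equiv 0$ and $x^2\equiv 0 \pmod m$, so the matrix $A$ with single nonzero entry $x$ is a nonzero matrix with $A^*A=0$; hence $M_n(\mathbb{Z}_m)$ is not even a Rickart $*$-ring. With either repair, the remainder of your proof (including sufficiency via semisimplicity, $*$-regularity, and the fact that finite direct products preserve the Baer $*$ property) goes through.
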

\begin{example} \label{ex301} Let $R=M_2$($\mathbb Z_6$), which is a $*$-ring with transpose
as an involution. By Corollary \ref{c031'}(ii), $\mathbb{Z}_6$ is
a Baer $*$-ring and hence $\mathbb{Z}_6$ is a quasi-Baer $*$-ring.
By \cite[Proposition 2.6]{Bir}, $R$ is a quasi-Baer $*$-ring and
hence $R$ is a p.q.-Baer $*$-ring. Note that $R$ contains finitely
many elements. By Corollary \ref{c031'}(i), $R$ is not a Baer
$*$-ring.
\end{example}
The following theorem leads to a characterization of weakly
p.q.-Baer $*$-rings.
\begin{theorem} \label{nt301} Let $R$ be a $*$-ring (not necessarily with the unity), $x\in R$ and $e \in R$ be a
central projection. Then the following statements are equivalent.\\
(a) $C(x)=e$; and $xRy=0$  if and only if $ey=0$.\\
(b) $xe=x$; and $xRy=0$  if and only if $ey=0$.
\end{theorem}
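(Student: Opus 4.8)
The plan is to prove the two implications separately, observing first that the second clause (``$xRy=0$ if and only if $ey=0$'') is literally identical in (a) and (b). Hence the entire content of the theorem is the relationship between the defining minimality property of the central cover, $C(x)=e$, and the weaker absorbing condition $xe=x$. Note also that since $e$ is assumed central, $xe=x$ and $ex=x$ are interchangeable throughout, which I will use freely.

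For (a) $\Rightarrow$ (b), the argument should be immediate. By the definition of central cover, $C(x)=e$ means $e$ is the \emph{smallest} central projection satisfying $ex=x$; in particular $ex=x$, and centrality gives $xe=ex=x$, which is exactly the first clause of (b). The second clause transfers verbatim.

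For (b) $\Rightarrow$ (a), I must upgrade ``$e$ fixes $x$'' to ``$e$ is the smallest central projection fixing $x$.'' Since $e$ is central, $xe=x$ already yields $ex=x$, so $e$ is a central projection fixing $x$; the real work is minimality. Given any central projection $e'$ with $e'x=x$, I want $e\leq e'$, i.e.\ $e=ee'$. The key idea is to feed the probe element $y=e-ee'$ into the biconditional: a direct computation using centrality of $e,e'$ together with the absorbing identities $xe=x$ and $xe'=x$ shows that $xr(e-ee')=xr-xr=0$ for every $r\in R$, hence $xR(e-ee')=0$. The biconditional then gives $e(e-ee')=0$; but $e(e-ee')=e-ee'$ because $e^2=e$, so $e=ee'$, that is $e\leq e'$. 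Combined with $ex=x$, this shows $e=C(x)$, establishing (a).

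The main subtlety is purely structural: $R$ is not assumed to have a unity, so I must resist every shortcut that passes through $1-e'$ and instead phrase everything in terms of the products $ee'$ and the identities $xe=x=xe'$. Once $y=e-ee'$ is recognized as the correct probe for the biconditional, the verification $xR(e-ee')=0$ is a routine rearrangement of central factors, and no further hypotheses on $R$ are needed.
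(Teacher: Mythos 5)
Your proof is correct and follows essentially the same route as the paper: the forward direction is immediate from the definition of central cover, and the reverse direction feeds a probe element into the biconditional to get $e(e-ee')=0$, hence $e\leq e'$. The only cosmetic difference is your probe $y=e-ee'$ versus the paper's $y=e-e'$ (both central, both yielding $e=ee'$ in one step), so the two arguments are interchangeable.
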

\begin{proof} Observe that, if the statement $(a)$ holds then the statement $(b)$ holds.
             Conversely suppose statement $(b)$ holds. To prove statement $(a)$, it is sufficient to
               prove $e$ is the smallest central projection with $xe=x$.
               Let $e' \in R$ be the central projection such that
               $xe'=x$. Therefore $x(e-e')=0$. As $e-e'$ is
               central, we have $xR(e-e')=0$. By the assumption, we
               get $e(e-e')=0$. Thus $e \leq e'$.
\end{proof}
    \begin{corollary} \label{nc301} $R$ is a weakly p.q.-Baer $*$-ring if and only
  if for every $x\in R$ there exists a central projection $e \in R$ such
  that
  (1) $xe=x$; (2) $xRy=0$ if and only if $ey=0$.
  \end{corollary}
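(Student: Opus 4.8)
The plan is to read this corollary directly off Theorem \ref{nt301}, recognizing that each direction of the claimed equivalence is simply the universally quantified form of one of the two statements shown equivalent there. The only care needed is to match the definition of a weakly p.q.-Baer $*$-ring (which mentions the central cover explicitly) with statement $(a)$, and the corollary's conditions $(1)$ and $(2)$ with statement $(b)$.

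First I would treat the forward implication. Suppose $R$ is a weakly p.q.-Baer $*$-ring and fix $x\in R$. By definition, $x$ possesses a central cover $e=C(x)$ for which $xRy=0$ if and only if $ey=0$; this is precisely statement $(a)$ of Theorem \ref{nt301} for the central projection $e$. Applying that theorem, statement $(b)$ holds, so $xe=x$ together with the equivalence $xRy=0 \Leftrightarrow ey=0$. Hence the central projection $e$ satisfies conditions $(1)$ and $(2)$, as required.

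Conversely, assume the stated condition and fix $x\in R$. Choose a central projection $e$ with $xe=x$ and with $xRy=0$ if and only if $ey=0$; this is exactly statement $(b)$. By Theorem \ref{nt301}, statement $(a)$ follows, so $e=C(x)$ and $xRy=0 \Leftrightarrow ey=0$. Thus $x$ has a central cover, namely $e$, satisfying the annihilator condition from the definition, so $R$ is a weakly p.q.-Baer $*$-ring.

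I do not anticipate any genuine obstacle here: all the work is already carried out in Theorem \ref{nt301}, whose nontrivial content is that a central projection $e$ satisfying merely $xe=x$ (together with the annihilator equivalence) is automatically the \emph{smallest} such central projection, i.e.\ the central cover. The only subtlety worth flagging is that in the definition of a weakly p.q.-Baer $*$-ring the projection $e$ is assumed from the outset to be the central cover, whereas in the corollary $e$ need only fix $x$ on the right; Theorem \ref{nt301} is exactly what bridges this gap, so the proof reduces to quoting it for every $x\in R$.
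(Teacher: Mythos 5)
Your proposal is correct and matches the paper's intent exactly: the paper states Corollary \ref{nc301} immediately after Theorem \ref{nt301} with no separate proof, precisely because, as you observe, each direction is just statement $(a)$ or $(b)$ of that theorem quantified over all $x \in R$. Your write-up simply makes this routine deduction explicit, including the key point that the theorem upgrades ``$xe=x$'' to ``$e=C(x)$''.
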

 We give one more characterization of a weakly p.q.-Baer
$*$-rings as follows.
\begin{theorem} \label{th303'} The following conditions on a
$*$-ring $R$ are equivalent:\\
(a) $R$ is a weakly p.q.-Baer $*$-ring.\\
(b) $R$ has a semi-proper involution, and for each $x \in R$,
there exists a central projection $e$ such that $r_R(xR)=r_R(eR)$.
\end{theorem}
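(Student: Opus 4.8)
The plan is to prove the two implications separately, using the characterization of central covers already established in Theorem~\ref{nt301} and Corollary~\ref{nc301}, together with Proposition~\ref{pr1}'s link between the annihilator condition and semi-properness of the involution. For the forward direction $(a)\Rightarrow(b)$, I would start from an arbitrary $x\in R$ and invoke the definition of a weakly p.q.-Baer $*$-ring to produce a central projection $e=C(x)$ with the property that $xRy=0$ if and only if $ey=0$. The immediate task is to convert this central-cover condition into the annihilator equality $r_R(xR)=r_R(eR)$. This should follow by a direct double-inclusion argument: if $y\in r_R(xR)$ then $xRy=0$, so $ey=0$, which since $e$ is a \emph{central} projection gives $eRy=0$, i.e.\ $y\in r_R(eR)$; the reverse inclusion runs the same way with the biconditional read in the other direction. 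The remaining piece of $(b)$ is semi-properness of the involution, which I expect to obtain from a short computation: if $aRa^*=0$ for some $a$, then taking $x=a$ and $y=a^*$ in the central-cover condition forces $C(a)a^*=0$, and combined with $C(a)a=a$ (or its adjoint form, via Theorem~\ref{th1}(1) giving $C(a)=C(a^*)$) one should be able to deduce $a=0$.

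For the converse $(b)\Rightarrow(a)$, I would fix $x\in R$ and take the central projection $e$ furnished by hypothesis with $r_R(xR)=r_R(eR)$. By Corollary~\ref{nc301}, to show $R$ is weakly p.q.-Baer it suffices to verify the two conditions $xe=x$ and the biconditional $xRy=0\iff ey=0$. The biconditional is essentially the annihilator equality re-expressed: $xRy=0$ means $y\in r_R(xR)=r_R(eR)$, which for central $e$ is equivalent to $ey=0$. The genuine content is establishing $xe=x$, equivalently $x(1-e)=0$ in the unital case—but here $R$ need not have a unity, so I cannot simply write $1-e$.

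The main obstacle I anticipate is exactly this last point: deriving $xe=x$ from $r_R(xR)=r_R(eR)$ in the \emph{absence of a unit element}, where the usual trick $x=x\cdot 1=xe+x(1-e)$ is unavailable. This is precisely where semi-properness must do the work. The idea would be to show $x(e-xe)\cdots$ is not the right handle; instead I would consider the element $a=x-xe$ and aim to prove $a=0$ via semi-properness. Observe that $e\in r_R(eR)=r_R(xR)$ is false in general, so one must argue more carefully: since $e$ is central, $xe\cdot e=xe$, and one checks that $x-xe$ annihilates $eR$ on the right, so $x-xe\in r_R(eR)=r_R(xR)$, giving $xR(x-xe)=0$. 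Forming $aRa^*$ with $a=x-xe$ and using that $aR(x-xe)^*$ reduces, via the annihilator relation and centrality of $e$, to $0$, semi-properness then forces $a=0$, i.e.\ $xe=x$. Getting this semi-proper argument to close cleanly—handling the adjoints so that the hypothesis $aRa^*=0$ is literally produced—will require the most care, and I would lean on Theorem~\ref{th1} and the centrality of $e$ throughout to keep the left/right annihilator bookkeeping consistent.
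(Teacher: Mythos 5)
Your overall route is the same as the paper's: in $(a)\Rightarrow(b)$ you obtain semi-properness by feeding $a$ and $a^*$ into the central-cover biconditional, and the annihilator equality by double inclusion; in $(b)\Rightarrow(a)$ you reduce everything to proving $xe=x$ by applying semi-properness to $a=x-xe$. This is exactly the paper's argument, and the forward direction as you describe it is correct.

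The one step you yourself flagged as delicate is, however, where your derivation as written does not close. You derive $xR(x-xe)=0$ from $x-xe\in r_R(eR)=r_R(xR)$, and then propose that $(x-xe)R(x-xe)^*$ ``reduces to $0$ via the annihilator relation and centrality.'' Expanding with centrality of $e$ gives $(x-xe)r(x^*-ex^*)=xr(x^*-ex^*)$, so what the computation actually requires is $xR(x^*-ex^*)=0$, i.e.\ the relation with $x^*$ in the right-hand slot, not $x$. This does not follow algebraically from $xR(x-xe)=0$: taking adjoints of that relation only returns $(x^*-ex^*)Rx^*=0$, which is the same statement again. The fix is a one-line application of your own membership argument to $x^*$ instead of $x$: centrality and idempotency of $e$ give $eR(y-ey)=0$ for \emph{every} $y\in R$, so in particular $x^*-ex^*\in r_R(eR)=r_R(xR)$, hence $xR(x^*-ex^*)=0$, and then $(x-xe)R(x-xe)^*=0$ follows and semi-properness kills $x-xe$. (This is precisely how the paper proceeds: it states $eR(y-ey)=0$ for all $y$ and then puts $y=x^*$.) One further caution: you suggest leaning on Theorem \ref{th1}, but that theorem is proved for p.q.-Baer $*$-rings, which by Proposition \ref{pr1} have unity; it cannot be invoked while proving that a general $*$-ring is weakly p.q.-Baer. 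Fortunately, nothing in the corrected argument needs it.
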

\begin{proof} $(a)\Rightarrow (b)$: Let $a \in R$ be such that
$aRa^*=0$. There exists a projection $e \in R$ such that (1)
$e=C(a)$; (2) $aRy=0$ if and only if $ey=0$. As $aRa^*=0$, we have
$ea^*=0$, that is
 $ae=0$. This gives $a=0$. Hence $*$ is semi-proper. Let $y \in
r_R(xR)$. Consequently, $xRy=0$. So $ey=0$. It follows that $eRy=0$.
This yields $y \in r_R(eR)$. Therefore $r_R(xR) \subseteq
r_R(eR)$. Let $z \in r_R(eR)$. Then $eRz=0$. Consider
$xRz=xeRz=0$. This gives $z \in r_R(xR)$. Hence $r_R(eR) \subseteq
r_R(xR) $. Thus $r_R(xR)=r_R(eR)$.
\vspace{3mm} \\
$(b)\Rightarrow (a)$: Let $x\in R$ and $e$ be the central
projection such that $r_R(xR)=r_R(eR)$. Since for all $y \in R$,
$eR(y-ey)=0$, we have $xR(y-ey)=0$. Put $y=x^*$, we get
$xR(x^*-ex^*)=0$. Let $(x-xe)r(x^*-ex^*) \in (x-xe)R(x^*-ex^*)=
(x-xe)R(x-xe)^* $, where $r \in R$. Clearly
$(x-xe)r(x^*-ex^*)=(x-xe)rx^* - (x-xe)rex^*=xr (x^*-ex^*)=0$.
Therefore $(x-xe)R(x-xe)^*=0$. As $*$ is semi-proper, we have
$x-xe=0$. Hence $x=xe$. If $xRy=0$ then $y \in r_R(xR)=r_R(eR)$,
so $eRy=0$. Consequently $eey=0$, that is, $ey=0$. Thus, by
Corollary \ref{nc301}, $R$ is a weakly p.q.-Baer $*$-ring.
\end{proof}
\begin{theorem} \label{th302'} The following conditions on a
$*$-ring $R$ are equivalent:\\
(a) $R$ is a p.q.-Baer $*$-ring.\\
(b) $R$ is a weakly p.q.-Baer $*$-ring with the unity.
\end{theorem}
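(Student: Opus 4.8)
The plan is to derive both implications by assembling the characterizations already established, since almost all the substantive work has been done in the earlier results; the theorem is essentially the statement that the unity element is the \emph{only} feature separating the weakly p.q.-Baer condition from the p.q.-Baer condition.

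For the implication $(a)\Rightarrow(b)$, I would first invoke Proposition~\ref{pr1} to conclude that a p.q.-Baer $*$-ring $R$ automatically possesses a unity element (and, as a byproduct, a semi-proper involution). It then remains to verify that $R$ is weakly p.q.-Baer. This is immediate from Theorem~\ref{th002'}: that theorem produces, for each $x\in R$, a central cover $e=C(x)$ satisfying $xRy=0$ if and only if $ey=0$, which is precisely the defining condition for a weakly p.q.-Baer $*$-ring. Hence $R$ is weakly p.q.-Baer and has unity.

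For the converse $(b)\Rightarrow(a)$, I would start from the assumption that $R$ is weakly p.q.-Baer and apply the forward implication of Theorem~\ref{th303'}. This yields two facts: the involution of $R$ is semi-proper, and for each $x\in R$ there is a central projection $e$ with $r_R(xR)=r_R(eR)$. The latter is exactly condition~(b) of Proposition~\ref{pr301}, while condition~(a) of that proposition---the existence of a unity---is supplied by the hypothesis in~(b). Proposition~\ref{pr301} then delivers the conclusion that $R$ is a p.q.-Baer $*$-ring.

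The proof presents no genuine obstacle: both directions are short deductions from previously proven equivalences. The one point worth emphasizing is the role of the unity hypothesis. The semi-proper involution comes for free on both sides---it is part of the weakly p.q.-Baer characterization in Theorem~\ref{th303'} and is guaranteed for p.q.-Baer rings by Proposition~\ref{pr1}---so adjoining a unity is the sole additional requirement needed to upgrade a weakly p.q.-Baer $*$-ring to a p.q.-Baer $*$-ring. This is what makes the theorem the natural bridge between the two notions and motivates the subsequent unitification discussion for Problem~2.
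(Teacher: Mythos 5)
Your proposal is correct and follows essentially the same route as the paper: the forward direction combines Proposition~\ref{pr1} with Theorem~\ref{th002'}, and the converse extracts $r_R(xR)=r_R(eR)$ from the weakly p.q.-Baer hypothesis and feeds it, together with the unity, into Proposition~\ref{pr301}. The only cosmetic difference is that you cite Theorem~\ref{th303'} as a black box where the paper says ``by similar steps as in Theorem~\ref{th303'},'' which is if anything a cleaner way to package the same argument.
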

\begin{proof} $(a)\Rightarrow (b)$: Follows from Proposition
\ref{pr1} and Theorem \ref{th002'}.
\vspace{3mm} \\
$(b)\Rightarrow (a)$: By the definition of a weakly p.q.-Baer
$*$-ring,
 for each $x \in R$, there exists a projection $e$ such
that (1) $C(x)=e$; (2) $xRy=0$ if and only if $ey=0$. By similar
steps as in Theorem \ref{th303'}, we have $r_R(xR)=r_R(eR)$. By
Proposition \ref{pr301}, $R$ is a p.q.-Baer $*$-ring.
\end{proof}

   \section {Unitification}

   Theorem \ref{th302'} distinguishes 
   p.q.-Baer $*$-rings and weakly p.q.-Baer $*$-rings. With
   understanding of this difference, we try to adjoin the unity to
   weakly p.q.-Baer $*$-rings (that is an unitification of weakly p.q.-Baer $*$-rings). This gives a partial solution of
   {\bf Problem 2}. \\
  \indent First, we define an unitification of a $*$-ring as
   follows.
 \begin{definition} \label{def1} Let $R$ be a $*$-ring. We say that $R_1$ is an unitification of $R$, if there exists an auxiliary ring $K$, called the ring of
   scalars (denoted by $ \lambda, \mu,...$) such that,\\
   1) $K$ is an integral domain with involution (necessarily
  semi-proper), that is, $K$ is a commutative $*$-ring with unity
   and without divisors of zero (the identity involution is
   permitted),\\
   2) $R$ is a $*$-algebra over $K$ (that is, $R$ is a left $K$-module such that,
   identically
   $1a=a,~\lambda (ab)=(\lambda a)b=a (\lambda b),~ and~(\lambda a)^*=\lambda ^*
   a^*$).\\
     Define $R_1=R \oplus K$ (the additive group direct sum), thus
   $(a, \lambda)=(b, \mu)$ means, by the definition that $a=b$ and $\lambda
   =\mu$, and addition in $R_1$, is defined by the formula $(a, \lambda)+(b, \mu)=(a+b, \lambda +
   \mu)$. Define  $(a, \lambda)(b, \mu)=(ab+ \mu a+ \lambda b, \lambda
   \mu)$, $\mu (a, \lambda)=(\mu a, \mu \lambda)$, $(a, \lambda)^*=(a^*, \lambda
   ^*)$. Evidently $R_1$ is also a $*$-algebra over $K$, has unity
   element $(0, 1)$ and $R$ is a
   $*$-ideal in $R_1$.
   \end{definition}
    The following lemmas are elementary facts about unitification $R_1$ of a $*$-ring $R$.
\begin{lemma} \label{lm1} With notations as in Definition
\ref{def1}, if an involution on $R$ is semi-proper, then so is the
involution of $R_1$
\end{lemma}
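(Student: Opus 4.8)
The plan is to verify the defining condition of semi-properness directly for $R_1$. Recall that semi-properness of the involution of $R_1$ means $(a,\lambda)R_1(a,\lambda)^*\neq 0$ for every nonzero $(a,\lambda)\in R_1$, and that $(a,\lambda)^*=(a^*,\lambda^*)$. So, given a nonzero element $(a,\lambda)$, I must exhibit a middle factor $(b,\mu)\in R_1$ with $(a,\lambda)(b,\mu)(a,\lambda)^*\neq 0$. I would split the argument into two cases according to whether the scalar part $\lambda$ vanishes.

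In the first case, $\lambda\neq 0$, I would simply take the middle factor to be the unity $(0,1)$, so that the candidate element is $(a,\lambda)(a,\lambda)^*$. Using the multiplication rule of Definition \ref{def1}, one computes $(a,\lambda)(a^*,\lambda^*)=(aa^*+\lambda^* a+\lambda a^*,\,\lambda\lambda^*)$, whose $K$-component is $\lambda\lambda^*$. Since the involution is an additive bijection of $K$ of period at most two, $\lambda\neq 0$ forces $\lambda^*\neq 0$, and because $K$ is an integral domain the product $\lambda\lambda^*$ is nonzero. Hence $(a,\lambda)(a,\lambda)^*$ has a nonzero scalar component, so it is a nonzero element of $(a,\lambda)R_1(a,\lambda)^*$.

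In the second case, $\lambda=0$ (so that necessarily $a\neq 0$), a direct computation from the product formula gives $(a,0)(b,\mu)(a^*,0)=(aba^*+\mu aa^*,\,0)$ for all $b\in R$ and $\mu\in K$. Taking $\mu=0$, the element $(aba^*,0)$ lies in $(a,0)R_1(a,0)^*$ for every $b\in R$. Because the involution on $R$ is semi-proper and $a\neq 0$, we have $aRa^*\neq 0$, so there is some $b\in R$ with $aba^*\neq 0$; for this choice the element $(aba^*,0)$ is nonzero.

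Since every nonzero element of $R_1$ falls into exactly one of these two cases, in each case I have produced a nonzero element of $(a,\lambda)R_1(a,\lambda)^*$, which proves that the involution of $R_1$ is semi-proper. The computation is entirely routine; the only point requiring care is the reduction to the scalar component in the first case, where I must invoke that $K$ has no zero divisors to conclude $\lambda\lambda^*\neq 0$ --- this is precisely where the integral-domain hypothesis on the ring of scalars is used, while the semi-properness of $R$ itself is needed only to handle the purely ring-theoretic second case.
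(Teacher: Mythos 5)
Your proof is correct and takes essentially the same route as the paper's: the paper argues contrapositively, assuming $(a,\lambda)R_1(a,\lambda)^*=(0,0)$ and using the middle factor $(0,1)$ to force $\lambda\lambda^*=0$ (hence $\lambda=0$ since $K$ is an integral domain), then middle factors $(r,0)$ together with semi-properness of $R$ to force $a=0$. Your case split on $\lambda\neq 0$ versus $\lambda=0$ is just the direct rephrasing of those same two steps, with the same choices of middle factor.
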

\begin{proof} Let $(a, \lambda)\in R_1$ be such that $(a, \lambda) R_1 (a,
\lambda)^*=(0,0) $. So $(a, \lambda) (0,1) (a^*,
\lambda^*)=(aa^*+\lambda^* a+ \lambda a^*, \lambda
\lambda^*)=(0,0) $, This gives $ \lambda \lambda^*=0$. As
an involution of $K$ is semi-proper, we have $\lambda=0$. Therefore,
for any $r\in R$, $(a, 0) (r, 0) (a^*, 0)=(0, 0)$. That is
$(ara^*, 0)=(0, 0)$. So $ara^*=0 $ for all $r \in R$. Hence
$aRa^*=0$. Also an involution on $R$ is semi-proper, we have $a=0$. So
$(a, \lambda)=(0,0)$. Thus involution of $R_1$ is semi-proper.
\end{proof}
\begin{lemma} \label{lm3} With notations as in Definition
\ref{def1}, let $x \in R$ and let $e$ be a projection in $R$. Then $C(x)=e$
 in $R$ if and only if $C((x,0))=(e,0)$ in
$R_1$.
\end{lemma}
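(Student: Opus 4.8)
The plan is to reduce everything to bookkeeping with central projections of $R$, exploiting that $K$ is an integral domain so the scalar component of any projection of $R_1$ is forced to be $0$ or $1$. First I would record two structural facts. For a projection $e$ of $R$, the element $(e,0)$ is automatically a projection of $R_1$ (its square is $(e^2,0)$ and its adjoint is $(e^*,0)$), and using the multiplication $(a,\lambda)(b,\mu)=(ab+\mu a+\lambda b,\lambda\mu)$ one checks that $(h,\lambda)$ is central in $R_1$ precisely when $h$ commutes with every element of $R$ (the scalar parts match automatically since $K$ is commutative). Thus $(e,0)$ is central in $R_1$ iff $e$ is central in $R$. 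Second, any central projection $(h,\lambda)$ of $R_1$ satisfies $\lambda^2=\lambda$, so $\lambda\in\{0,1\}$, and unwinding the idempotency condition shows that the central projections of $R_1$ are exactly the $(g,0)$ and the complements $(0,1)-(g,0)=(-g,1)$, where $g$ ranges over the central projections of $R$.

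For the forward implication, assume $C(x)=e$ in $R$; in particular $e$ is a central projection of $R$ with $ex=x$. By the first fact $(e,0)$ is a central projection of $R_1$, and $(e,0)(x,0)=(ex,0)=(x,0)$, so $(e,0)$ is a central projection fixing $(x,0)$. It remains to show it is the smallest such. Let $(e',\lambda')$ be any central projection of $R_1$ with $(e',\lambda')(x,0)=(x,0)$, and split according to $\lambda'\in\{0,1\}$. When $\lambda'=0$, the element $e'$ is a central projection of $R$ with $e'x=x$, so minimality of $e=C(x)$ in $R$ gives $e\le e'$, hence $(e,0)\le(e',0)$. The case $\lambda'=1$ is the crux: here $(e',1)=(-g,1)$ for a central projection $g$ of $R$, the fixing condition $(e'x+x,0)=(x,0)$ becomes $gx=0$, and the desired $(e,0)\le(-g,1)$ unwinds (via $(e,0)(-g,1)=(e-eg,0)$) to the requirement $eg=0$. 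To obtain this I would form $e-eg$, note it is again a central projection of $R$ since $e,g$ are commuting central projections, and compute $(e-eg)x=ex-e(gx)=x$; minimality of $e$ then forces $e\le e-eg$, i.e.\ $e=e(e-eg)=e-eg$, so $eg=0$, as required. Hence $(e,0)=C((x,0))$.

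The converse is lighter. Assuming $C((x,0))=(e,0)$, centrality of $(e,0)$ in $R_1$ gives that $e$ is central in $R$, and $(e,0)(x,0)=(x,0)$ gives $ex=x$. If $e'$ is any central projection of $R$ with $e'x=x$, then $(e',0)$ is a central projection of $R_1$ with $(e',0)(x,0)=(x,0)$, so minimality of $(e,0)$ in $R_1$ yields $(e,0)\le(e',0)$, i.e.\ $e\le e'$; thus $e=C(x)$ in $R$. The only genuine obstacle is the $\lambda'=1$ branch of the forward direction: one must rule out the complementary projections $(-g,1)$ as smaller fixers of $(x,0)$, and the device that makes it work is replacing $g$ by $eg$ and recognizing $e-eg$ as a competing central-cover candidate inside $R$. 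Everything else is routine verification against the operations of Definition \ref{def1}.
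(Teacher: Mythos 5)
Your proof is correct, but it takes a genuinely different route from the paper's. The paper never classifies the projections of $R_1$: it starts from $e=C(x)$ \emph{together with} the annihilator property ($xRy=0$ iff $ey=0$) --- so it implicitly works in the weakly p.q.-Baer setting in which the lemma is later applied, since that property does not follow from the bare definition of a central cover --- then transfers this property to $R_1$ by computing $(x,0)R_1(b,\lambda)=(x,0)R_1(eb+\lambda e,0)$, concluding $(e,0)(b,\lambda)=(0,0)$, and finally invokes Theorem \ref{nt301} to identify $(e,0)$ as $C((x,0))$; the converse implication is not written out there. Your argument instead is purely order-theoretic and uses only the definition: the integral-domain hypothesis on $K$ pins down the central projections of $R_1$ as exactly $(g,0)$ and $(-g,1)$ with $g$ central in $R$, and the troublesome $(-g,1)$ case is handled by the competitor $e-eg$, whose minimality comparison forces $eg=0$; you also prove both directions explicitly. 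What each approach buys: yours is self-contained and establishes the lemma as literally stated, for any $*$-ring in the setting of Definition \ref{def1}, with no weakly p.q.-Baer hypothesis on $R$; the paper's proof, by routing through Theorem \ref{nt301}, delivers the stronger conclusion that $(e,0)$ also satisfies the annihilator equivalence $(x,0)R_1(b,\lambda)=(0,0)$ iff $(e,0)(b,\lambda)=(0,0)$, which is precisely what the application in Theorem \ref{th304} (via Corollary \ref{nc301}) needs; with your version of the lemma, that transfer would have to be argued separately in the unitification theorem.
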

\begin{proof} Let $x\in R$. Suppose $e=C(x)$. So (1) $xe=x$;
  (2) $xRy=0$ if and only if $ey=0$. Consider $(x,0)(e,0)=(xe, 0)=(x,
0)$. Suppose $(x,0)R_1(b, \lambda)=(0,0)$. Hence
$(0,0)=(x,0)R_1(b, \lambda)= (xe,0)R_1(b, \lambda)=
(x,0)(e,0)R_1(b, \lambda)=(x,0)R_1(e,0)(b, \lambda)=(x,0)R_1(eb+
\lambda e, 0)$. This yields $xR(eb+ \lambda e)=0$. Thus $e(eb+
\lambda e)=0$. Consequently $eb+ \lambda e =0 $, that is, $(e,0)(b,
\lambda)=(0,0)$.
 Therefore, by Theorem \ref{nt301}, $(e,0)$ is a central cover of $(x,0)$ in $R_1$.
\end{proof}
\begin{proposition} \label{pr304} In a weakly p.q.-Baer
$*$-ring $R$, $xRy=0$ if and only if $C(x)C(y)=0$.
\end{proposition}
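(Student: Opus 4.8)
The plan is to work throughout with the two central covers guaranteed by the definition of a weakly p.q.-Baer $*$-ring: set $e=C(x)$ and $f=C(y)$. Each satisfies the defining equivalence, so $xRz=0\iff ez=0$ and $yRz=0\iff fz=0$; moreover $ex=x$ and $fy=y$ (the equalities $xe=x$, $yf=y$ hold by definition, and since $e,f$ are central we may read them from either side). The entire argument then reduces to manipulating the central projections $e$ and $f$ and invoking the \emph{minimality} built into the notion of central cover, without ever touching annihilators beyond the defining equivalence.

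The reverse implication is the short one. Assuming $C(x)C(y)=ef=0$, I would compute $ey=e(fy)=(ef)y=0$, using $fy=y$. The central-cover property of $x$, namely $xRz=0\iff ez=0$ taken with $z=y$, then yields $xRy=0$ at once.

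For the forward implication, suppose $xRy=0$. The same equivalence for $x$ gives $ey=0$, and the goal is to upgrade this to $ef=0$. The key device---forced on us because $R$ need not have a unity, so the familiar complement $1-e$ is unavailable---is the element $g=f-fe=f-ef$. I would first verify that $g$ is again a central projection: from $e^2=e$, $f^2=f$, the centrality $ef=fe$, and $e^*=e$, $f^*=f$ one checks $g^2=g$ and $g^*=g$, while centrality of $g$ is inherited from that of $e$ and $f$. Next I would compute $gy=fy-fey=y-e(fy)=y-ey=y$, so $g$ is a central projection satisfying $gy=y$.

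The punchline is the minimality of the central cover. Since $f=C(y)$ is the \emph{smallest} central projection $h$ with $hy=y$, the projection $g$ must dominate it, i.e.\ $f\le g$, which by the definition of $\le$ means $f=fg=f(f-fe)=f-fe$; hence $fe=0$, that is $C(x)C(y)=ef=0$, as required. I expect the only genuine obstacle to be the unit-free setting: one cannot quote Theorem \ref{th1} directly, nor form $1-e$, so the argument has to be rerouted through the explicitly constructed projection $f-fe$ together with the minimality clause of the central cover, rather than through any annihilator identity available only in the presence of a unity.
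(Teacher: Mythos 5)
Your proof is correct, but its mechanism differs from the paper's. The paper disposes of this proposition by pointing to the proof of part (2) of Theorem \ref{th1}: there, with $e=C(x)$ and $f=C(y)$, the forward implication runs $xRy=0\Rightarrow ey=0\Rightarrow eRy=0$ (centrality of $e$), hence $yRe=0$, and then the defining equivalence for the cover of $y$ (namely $yRz=0\iff fz=0$) yields $fe=0$; the converse retraces these steps. Thus the paper invokes the annihilator-type equivalence for \emph{both} central covers. You invoke it only for $C(x)$: on the $y$ side you use nothing but $fy=y$ (for the easy direction) and the order-theoretic minimality of $f=C(y)$ (for the hard direction), via the auxiliary central projection $g=f-fe$ satisfying $gy=y$, which forces $f\le g$ and hence $fe=0$. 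Both arguments are unit-free and of comparable length; yours proves marginally more, since it needs $y$ only to possess a central cover in the bare sense (a smallest central projection fixing $y$), not the full weakly p.q.-Baer equivalence for $y$, whereas the paper's route is the economical one to cite given that Theorem \ref{th1}(2) is already on record. One small correction to your closing remark: the proof of Theorem \ref{th1}(2) nowhere forms $1-e$, so the only genuine rerouting the unit-free setting demands of the paper's argument is the passage from $eRy=0$ to $yRe=0$, which centrality supplies at once; your construction of $f-fe$ is a valid alternative to that step, not the only way around the missing unity.
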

\begin{proof} Follows in the same way as the proof of the Part (2) of the
Theorem \ref{th1}.
\end{proof}
\begin{theorem}  \label{th301} Let $R$ be a weakly p.q.-Baer
$*$-ring. If $e$ and $f$ are projections in $R$, then there exists
a central projection $g$ such that $e \leq g$ and $f \leq g$ (i.e.
$g$ is an upper-bound of the set $\{e, f \}$).
\end{theorem}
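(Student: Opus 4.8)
The plan is to take the central covers of $e$ and $f$ and form their Boolean join. Since $R$ is a weakly p.q.-Baer $*$-ring, every element has a central cover, so in particular $C(e)$ and $C(f)$ exist and are central projections. From the defining relation $C(e)e=e$ together with the centrality of $C(e)$ I obtain $eC(e)=e$, i.e. $e\leq C(e)$; likewise $f\leq C(f)$. Thus it suffices to produce a single central projection dominating both $C(e)$ and $C(f)$, since $\leq$ is transitive on projections.

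To this end I would set $g=C(e)+C(f)-C(e)C(f)$, the usual join of two commuting central idempotents. The verification that $g$ is a central projection is routine: $g$ is central because it is built from the central elements $C(e),C(f)$ by addition and multiplication; it is self-adjoint since $(C(e)C(f))^{*}=C(f)^{*}C(e)^{*}=C(f)C(e)=C(e)C(f)$ using that $C(e),C(f)$ are self-adjoint and commute; and a direct expansion of $g^{2}$, using $C(e)^{2}=C(e)$, $C(f)^{2}=C(f)$ and commutativity, collapses back to $g$, so $g$ is idempotent.

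Finally I would check that $g$ is the desired upper bound. Computing $eg=eC(e)+eC(f)-eC(e)C(f)$ and substituting $eC(e)=e$ (hence $eC(e)C(f)=eC(f)$) gives $eg=e+eC(f)-eC(f)=e$, so $e\leq g$; by the symmetric computation $f\leq g$. Since $R$ need not have a unity, the point worth stressing is that the construction of $g$ never invokes $1$, so it is legitimate in the unit-less setting; this is really the only place where one must be careful, and the explicit formula for $g$ sidesteps it. There is no deep obstacle here, the sole technical content being the elementary idempotency computation above.
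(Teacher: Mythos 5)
Your proof is correct, and it takes a genuinely different and more direct route than the paper's. You form the join $g=C(e)+C(f)-C(e)C(f)$ of the two central covers and verify by hand that it is a central projection satisfying $eg=e$ and $fg=f$; every ingredient you use (existence of central covers for all elements, the fact that $C(x)$ is a central projection with $xC(x)=x$, closure of the center under the ring operations, transitivity of $\leq$) is available in a unit-less weakly p.q.-Baer $*$-ring, so the argument is sound. The paper instead argues in two stages: it first treats the case where $f$ is itself central, setting $g=f+C(e-ef)$ and invoking Proposition \ref{pr304} (that $xRy=0$ if and only if $C(x)C(y)=0$) to get the orthogonality $fC(e-ef)=0$, so that $g$ is an orthogonal sum of central projections and no idempotency computation is needed; it then bootstraps to arbitrary $e,f$ by applying this special case three times (to $\{e,0\}$, to $\{f,0\}$, and to the two resulting central projections). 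Your construction collapses that bootstrap into one formula and dispenses with Proposition \ref{pr304} entirely, at the modest cost of expanding $g^{2}$ explicitly. In fact the two constructions produce the same element: unwinding the paper's bootstrap gives $e'=C(e)$, $f'=C(f)$, and then $g''=C(f)+C\bigl(C(e)-C(e)C(f)\bigr)=C(e)+C(f)-C(e)C(f)$, which is exactly your $g$ (and is the least upper bound among central projections, a fact the paper records in passing but does not need). Your derivation is the shorter and more self-contained of the two.
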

\begin{proof} Suppose that $f$ is a central projection in $R$. Let
$g=f+C(e-ef)$ and $h=C(e-ef)$, that is, $g=f+h $. As $f$ is a central
projection, $C(f)=f$ and $fR(e-ef)=R(e-ef)f=\{0\}$. By Proposition
\ref{pr304}, $C(f) C(e-ef)=0 $, consequently $f h=0$. So $g=f+h$
is a central projection in $R$. Consider $fg=f(f+h)=f+fh=f$. Hence
$f \leq g$. Since $h=C(e-ef)$, $(e-ef)h=e-ef$. Therefore
$0=(e-ef)h-(e-ef)=eh-e(fh)-e+ef=eh-e+ef$. Thus $e=ef+eh=e(f+h)=eg$.
This yields $e \leq g$. Consequently $g$ a central projection
which is an upper-bound of $\{ e,f \}$. Moreover $g$ is the least
upper bound as a central projection in the set of all upper bounds
which are central projections. Now suppose $e$ and $f$ are any
two projections (not necessarily any of them to be central) in
$R$. Let $g'$ be a central projection in $R$ (we can take $g'=0$).
Let $e'$ be a central projection which is an upper bound of $\{e,
g' \}$ and $f'$  be a central projection which is an upper bound
of $\{f, g' \}$. Let $g''$  be a central projection which is an
upper bound of $\{e', f' \}$. Therefore $g''$ is an upper-bound of
$\{e,f \}$.
\end{proof}
    Now, we provide a partial solution to the unitification problem of weakly p.q.-Baer $*$ rings.
\begin{theorem} \label{th304} A weakly p.q.-Baer $*$-ring $R$ can be embedded in a
p.q.-Baer $*$-ring, provided there exists, a ring $K$ such that,\\
(i) $K$ is an integral domain with involution,\\
(ii) $R$ is a $*$-algebra over $K$,\\
(iii) For any $\lambda \in K-\{0\}$ there exists a projection
$e_{\lambda} \in R$ that is an upper bound for the central covers
of the right annihilators of $\lambda$, that is, for $t \in R$, if
$\lambda~t=0$ then $C(t) \leq e_{\lambda}$.
\end{theorem}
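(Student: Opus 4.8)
The plan is to take the unitification $R_{1}=R\oplus K$ of Definition \ref{def1} as the ambient ring: since $R$ is realized inside $R_{1}$ as the $*$-ideal $\{(a,0):a\in R\}$, any proof that $R_{1}$ is p.q.-Baer automatically yields the desired embedding. To show $R_{1}$ is p.q.-Baer I would verify that it is a \emph{weakly} p.q.-Baer $*$-ring with unity and then quote Theorem \ref{th302'}. The unity is $(0,1)$; and because $R$ is weakly p.q.-Baer its involution is semi-proper by Theorem \ref{th303'}, so by Lemma \ref{lm1} the involution of $R_{1}$ is semi-proper too. By the characterization in Theorem \ref{th303'} it then suffices to produce, for each $(a,\lambda)\in R_{1}$, a central projection of $R_{1}$ whose right annihilator equals $r_{R_{1}}((a,\lambda)R_{1})$.

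For the scalar-free elements $(a,0)$ there is nothing to do beyond invoking Lemma \ref{lm3}, which gives $C((a,0))=(C(a),0)$ together with the annihilator equivalence; the computation in Theorem \ref{th303'} then converts this into $r_{R_{1}}((a,0)R_{1})=r_{R_{1}}((C(a),0)R_{1})$. All the difficulty is in the case $\lambda\neq 0$, which is exactly where hypothesis (iii) enters. Here I would first expand $(a,\lambda)(c,\nu)(b,\mu)=(0,0)$ for all $(c,\nu)$: the $K$-coordinate reads $\lambda\nu\mu=0$, and since $K$ is a domain this forces $\mu=0$; the $R$-coordinate then collapses to the two conditions $ab+\lambda b=0$ and $acb+\lambda cb=0$ for all $c\in R$. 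Thus $r_{R_{1}}((a,\lambda)R_{1})$ is carried by elements $(b,0)$ satisfying these relations.

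The central fact that makes the torsion tractable is the following lemma, valid in any weakly p.q.-Baer $*$-ring: if $\lambda b=0$ and $e=C(b)$, then $\lambda e=0$. Indeed, for every $r\in R$ one has $br(\lambda e)=\lambda(bre)=(\lambda b)(re)=0$, so $bR(\lambda e)=0$; applying the defining annihilator property of the central cover with $y=\lambda e$ gives $e(\lambda e)=\lambda e^{2}=\lambda e=0$. Using this, set $e_{a}=C(a)$ and multiply $ab+\lambda b=0$ by the central projection $e_{a}$; since $e_{a}a=a$ this produces $ab=-\lambda e_{a}b$, whence $\lambda(b-e_{a}b)=0$, i.e.\ $b-e_{a}b$ lies in the right annihilator of $\lambda$ in $R$. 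Hypothesis (iii) now applies to give $C(b-e_{a}b)\leq e_{\lambda}$, and (after replacing $e_{\lambda}$ by a central upper bound via Theorem \ref{th301}) one gets $e_{\lambda}(b-e_{a}b)=b-e_{a}b$. The remaining summand $e_{a}b$ satisfies $a(e_{a}b)=-\lambda(e_{a}b)$ and lives in the corner $e_{a}R$, so it is governed by the central cover of $a$. Choosing, through Theorem \ref{th301}, a central projection dominating the torsion bound $e_{\lambda}$ and the relevant projection inside $e_{a}R$, every annihilating $(b,0)$ is seen to be supported on a single central projection.

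The main obstacle is the last step: upgrading this ``support'' statement to an exact identity $r_{R_{1}}((a,\lambda)R_{1})=r_{R_{1}}(g\,R_{1})$ for one central projection $g$. Concretely, the correct $g$ is the complement of the largest central projection $e_{0}$ of $R$ with $(a,\lambda)(e_{0},0)=(0,0)$, and this $e_{0}$ splits as a part in the corner $e_{a}R$ plus a part in the $\lambda$-torsion ideal. The projection $e_{\lambda}$ furnished by (iii) is only an \emph{upper} bound for the central covers of the $\lambda$-torsion and need not itself be annihilated by $\lambda$, so one cannot simply take $g=(e_{\lambda},0)$; the delicate point is to show that the supremum of the central projections annihilated by $\lambda$ actually exists below $e_{\lambda}$ and that its $R_{1}$-complement generates the annihilator exactly. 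It is precisely hypothesis (iii), together with the pairwise upper bounds of Theorem \ref{th301} and the torsion lemma above, that forces the $\lambda$-torsion to be generated by a central projection and so closes this gap. Once $e_{0}$ is identified, Lemma \ref{lm3} and Proposition \ref{pr304} verify the annihilator equivalence, establishing that $R_{1}$ is weakly p.q.-Baer with unity and hence, by Theorem \ref{th302'}, a p.q.-Baer $*$-ring containing $R$.
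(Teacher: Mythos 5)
Your skeleton coincides with the paper's: pass to the unitification $R_{1}=R\oplus K$, note $(0,1)$ is a unity, dispose of the elements $(a,0)$ via Lemma \ref{lm3}, and for $\lambda\neq 0$ reduce (through Corollary \ref{nc301} and Theorem \ref{th302'}) to exhibiting a central projection of $R_{1}$ that is a central cover of $(a,\lambda)$ with the annihilator property. Your torsion lemma (if $\lambda b=0$ and $e=C(b)$ then $\lambda e=0$) is correct, and it is in fact the mechanism by which hypothesis (iii) enters the paper's argument. But there is a genuine gap at exactly the point you yourself label ``the main obstacle'': you never construct the largest central projection $e_{0}$ of $R$ with $ae_{0}=-\lambda e_{0}$ (equivalently $(a,\lambda)(e_{0},0)=(0,0)$); you only assert that hypothesis (iii), Theorem \ref{th301} and the torsion lemma ``force'' the $\lambda$-torsion to be generated by a central projection and hence force $e_{0}$ to exist. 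That assertion is the whole content of the theorem at this stage. Nothing in a weakly p.q.-Baer $*$-ring guarantees suprema of infinite families of central projections: Theorem \ref{th301} produces an upper bound of a \emph{pair} of projections, not a least upper bound of an arbitrary set, and hypothesis (iii) gives only an upper bound $e_{\lambda}$ for the covers of torsion elements, which, as you note, need not itself be torsion. So the existence of $e_{0}$, and with it the exact identity $r_{R_{1}}((a,\lambda)R_{1})=r_{R_{1}}(gR_{1})$, is left unproved.

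The paper closes precisely this gap by a concrete construction, which occupies the first half of its proof. Writing $\gamma=-\lambda$: choose (Theorem \ref{th301}) a central projection $e$ that is an upper bound of $\{C(a),e_{\gamma}\}$; then $a=ae$ and $a-\gamma e\in eRe$, so one may set $h=C(a-\gamma e)$ (which satisfies $h\leq e$) and $g=e-h$. A direct computation gives $ag=\gamma g$. Maximality — every central projection $k$ with $ak=\gamma k$ satisfies $k\leq g$ — is where (iii) is used: from $ak=\gamma k$ and $a=ea$ one gets $\gamma(ek-k)=0$; putting $f=C(ek-k)$, an argument equivalent to your torsion lemma yields $\gamma f=0$, hence $f\leq e_{\gamma}\leq e$ by (iii); combined with $(ek-k)Re=0$ this forces $f=fe=0$, so $ek=k$, and then $(a-\gamma e)Rk=0$ gives $hk=0$ and $kg=k$. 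With this greatest $g$ in hand, your own computation with $f=C(b)$, $af=-\lambda f$ and $f\leq g$ (which is also the paper's) finishes the verification that $(-g,1)$ is the required central cover of $(a,\lambda)$. In short: you identified the right target object and the right auxiliary lemma, but omitted the one construction on which the theorem actually rests, so as written the proposal is not a proof.
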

\begin{proof} Let $R_1=R\oplus K$ (the additive group direct sum),
with operations in Definition \ref{def1}. First we prove that for
any $a \in R$ and a nonzero scalar $\gamma \in K$, there exists a
greatest central projection $g$ in $R$ such that $ag=\gamma g$. By
the condition (iii), there exists a projection $e_{\gamma} \in R$ that
is an upper bound for the central covers of the right annihilators
of $\gamma$. Let $e_0=C(a)$.  By Theorem \ref{th301}, there exists
a central projection $e$ which is an upper bound of $\{e_0,
e_{\gamma} \}$. Therefore $e$ is a projection in $R$ with $e_0
\leq e$, that is, $e_0=e_0e$. As $e_0=C(a)$, we have $ae_0=a$. It
follows that $ae_0e=ae$. Hence $a=ae$. Since $e$
is central, $a=eae \in eRe$. Therefore $a-\gamma e=eae - e \gamma
e= e(a- \gamma e) e \in eRe$. Let $h= C(a- \gamma e)$. So $(a-
\gamma e)h= a- \gamma e$. As $(a- \gamma e)e=a- \gamma e$, we have
$h \leq e$, i.e., $h=he$. Observe that $g=e-h$ is a central
projection. Consider $ag - \gamma e g= a (e-h) - \gamma e (e-h)
=ae - ah - \gamma e + \gamma eh= (a- \gamma e)- (a - \gamma
e)h=(a- \gamma e) - (a- \gamma e)=0 $. Also $\gamma eg= \gamma e
(e-h)=\gamma e - \gamma eh=\gamma e - \gamma he = \gamma e -
\gamma h= \gamma (e-h)= \gamma g$. This yields $0= ag - \gamma eg
= ag- \gamma g $. Thus $ag= \gamma g$.\\
\indent Now we prove maximality of
$g$. Let $k$ be a central projection in $R$ such that $ak= \gamma
k$, that is, $ak - \gamma k=0$. Since $a=ea$, $eak- \gamma k =0$.
Hence $e \gamma k - \gamma k =0$. Consequently $\gamma (ek-k)=0$.
Let $f= C(ek-k)$. So $f$ is a central projection in $R$. Therefore
$(\gamma f) (ek-k)= f \gamma (ek-k)=0$. It follows that
$(ek-k)(\gamma f)=0$, which further yields $(ek-k)R(\gamma f)=0$. Hence
 $f(\gamma f)=0$, and consequently $ \gamma f =0$. This gives
$C(f) \leq e_{\gamma} \leq e$. So $f \leq e$, that is,
 $f=fe$. As $e$ is a central projection and $f=
C(ek-k)$, $(ek-k)Re=0$. Therefore $fe=0$, thus $f=0$. Hence
$ek-k=f(ek-k)=0$. So $ek=k$, hence $k \leq e$. Since $ak =\gamma
k$ and $k$ is central, we have $(a-\gamma e) R k=0$. Further $h=C(a-
\gamma e)$, gives $h k=0$. Consider $kg=k (e-h)=ke-kh=ke-0=k$, that is,
 $ k \leq g$. Thus $g$ is the greatest central projection
such that $a g= \gamma g$.\\
  \indent To show $R_1$ is a p.q.-Baer
$*$-ring, it is sufficient to prove that for every element  $x \in
R_1$ there exists a central projection $e \in R_1$ such that (1)
$xe=x$; and (2) $xRy=0$ if and only if $ey=0$ (because of
Corollary \ref{nc301} and Theorem \ref{th302'}). Let $x=(a,
\lambda) \in R_1$. If $\lambda =0$, then by Lemma \ref{lm3}, $x$
has a central cover. Suppose $\lambda \neq 0$. By the above
discussion, there exists a greatest central projection $g$ in $R$
such that $ag=(- \lambda)g$. So, $xg=(a, \lambda) (g,
0)=(ag+\lambda g, 0)=(0,0)$. Let $e=1-g=(0,1)-(g,0)=(-g,1)$. We
prove that $C(x)=e$ in $R_1$. Consider $xe=(a, \lambda)(-g,
1)=(-ag+a-\lambda g, \lambda)=(\lambda g+a - \lambda g,
\lambda)=(a, \lambda)=x$. Suppose $(a, \lambda) R_1 (b,
\mu)=(0,0)$. So $(a, \lambda) (0,1) (b, \mu)=(0,0)$. Hence $(a,
\lambda)(b, \mu)=(0,0)$, that is, $(ab+\mu a+ \lambda b, \lambda
\mu)=(0,0)$. This gives, $ab+\mu a+ \lambda b=0$ and $\lambda
\mu=0$. As $\lambda \neq 0$, we have $\mu=0$. This yields $ab+
\lambda b=0$. Let $f=C(b)$ in $R$. Hence (1) $fb=b$; (2) $yRb=0$
if and only if $yf=0$. Since $(a, \lambda) R_1 (b, \mu)=(0,0)$
for any $r\in R$, we have $(a, \lambda) (r, 0) (b, 0)=(0,0)$. That is
$(arb+\lambda r b, 0)=(0,0)$. Therefore $arb+ \lambda rb=0$  for
any $r\in R$. Consider $0=arb+\lambda rb=arb+\lambda r f b=
(a+\lambda f) rb$ for any $r \in R$. That is $(a+\lambda f) Rb
=0$, and consequently $(a+\lambda f) f=0$.
 Therefore $af = - \lambda f$. Since $g$ is the greatest projection
with this property, we have $f \leq g$, i.e., $f=fg$. Consider
$ey=(-g,1)(b , \mu)=(-g,1)(b , 0)=(-gb+b,
0)=(-gfb+fb,0)=(-fb+fb,0)=(0,0) $. Thus $R_1$ is a p.q.-Baer
$*$-ring.
\end{proof}
A partial solution of {\bf Problem 2} analogous to a partial solution
of {\bf Problem 1} given by Berberian \cite{Ber}, can be obtained
as a corollary.
\begin{corollary} \label{c302'} Let $R$ be a weakly p.q.-Baer $*$-ring. If there
exists an involutary integral domain $K$ such that $R$ is a
$*$-algebra over $K$ and it is torsion free $K$-module, then $R$
can be embedded in a p.q.-Baer $*$-ring with preservation of
central covers.
\end{corollary}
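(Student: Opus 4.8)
The plan is to deduce this corollary directly from Theorem \ref{th304}: hypotheses (i) and (ii) of that theorem are already granted here, so the entire task reduces to (a) verifying condition (iii) of Theorem \ref{th304} under the torsion-free assumption, and (b) confirming that the embedding produced by that theorem preserves central covers. I expect essentially no resistance, because the genuine work of the unitification has already been absorbed into Theorem \ref{th304}, and the torsion-free hypothesis is precisely what is needed to make condition (iii) hold trivially.

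First I would verify condition (iii). The key observation is that torsion-freeness makes the right annihilators of nonzero scalars vanish: if $\lambda \in K-\{0\}$ and $t \in R$ satisfy $\lambda t = 0$, then, since $R$ is a torsion-free $K$-module and $K$ is an integral domain, we must have $t = 0$. Hence the only element of $R$ annihilated by $\lambda$ is $0$, whose central cover is $C(0)=0$. I would therefore take $e_{\lambda}=0$: this is (vacuously) an upper bound for the central covers of the right annihilators of $\lambda$, because $C(t)=C(0)=0 \leq 0 = e_{\lambda}$ for every such $t$. With condition (iii) in hand, Theorem \ref{th304} applies and embeds $R$ into the p.q.-Baer $*$-ring $R_{1}=R\oplus K$ of Definition \ref{def1}.

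It then remains to establish preservation of central covers, which is exactly the content of Lemma \ref{lm3}. The embedding realizing $R$ inside $R_{1}$ is the canonical map $a \mapsto (a,0)$, under which $R$ is a $*$-ideal of $R_{1}$. By Lemma \ref{lm3}, for any $x \in R$ one has $C(x)=e$ in $R$ if and only if $C((x,0))=(e,0)$ in $R_{1}$; thus the image of the central cover of $x$ is the central cover of the image of $x$, so central covers are carried to central covers by the embedding. This completes the argument.

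The only point I would be careful about is the legitimacy of the choice $e_{\lambda}=0$ in the application of Theorem \ref{th304}—namely that a family of central covers consisting solely of $C(0)=0$ genuinely admits $0$ as an upper bound in the order on projections. This is immediate once one unwinds the relevant definition, so I do not anticipate it as a true obstacle; rather, it is the single step where the torsion-free hypothesis is consumed, and everything else is formal bookkeeping inherited from Theorem \ref{th304} and Lemma \ref{lm3}.
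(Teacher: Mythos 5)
Your proposal is correct and follows the same route as the paper: the paper's entire proof consists of invoking Theorem \ref{th304} after asserting that torsion-freeness yields condition (iii), which it does by citing \cite[Corollary 4]{Wap}. Your only departure is that you verify (iii) directly---torsion-freeness over the integral domain $K$ forces the right annihilator of any nonzero $\lambda$ to be $\{0\}$, so $e_{\lambda}=0$ serves as the required upper bound since $C(0)=0$---which is valid and makes the argument self-contained, and your explicit appeal to Lemma \ref{lm3} for preservation of central covers under $a \mapsto (a,0)$ supplies a step the paper's one-line proof leaves implicit.
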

\begin{proof} By \cite[Corollary 4]{Wap}, if $R$ is torsion-free
then $R$ satisfies the condition $(iii)$ in Theorem \ref{th304}.
\end{proof}

\noindent {\bf Remark 2:} Let $A=C_{\infty}(T)\otimes M_2(\mathbb Z_3)$ (external direct product of the
$*$-rings), where $C_{\infty}(T)$ denotes the algebra of all
continuous, complex valued functions on $T$ that vanish at
$\infty$. In \cite{Ber}, it is proved that, $C_{\infty}(T)$ is a
commutative weakly Rickart $C^*$-algebra, 
when $T$ is a non-compact, locally compact, Hausdorff space.
Hence $C_{\infty}(T)$ is a weakly p.q.-Baer $*$-ring.
Since $M_2(\mathbb Z_3)$ is a Baer $*$-ring and hence it is a weakly
p.q.-Baer $*$-ring. 
Therefore $A$ is a weakly p.q.-Baer $*$-ring.
Also $A$ satisfies condition $(iii)$ in the
Theorem \ref{th304}.
 Since there does not exist an integral domain $K$ such that $A$
is torsion free $K$-module, so $A$ does not satisfies the
hypotheses of Corollary \ref{c302'}. However, $A$ satisfies the
hypotheses of the Theorem \ref{th304}. Moreover, $A_1=A \oplus
\mathbb Z$ (operations as in unitification) is a p.q.-Baer
$*$-ring containing $A$ that preserves central covers.\\

\noindent {\bf Acknowledgment:} The first author gratefully
acknowledges the University Grant Commission, New Delhi, India
for the award of Teachers Fellowship under the faculty development
program, during the $XII^{th}$ plan period (2012-1017).

$$\diamondsuit\diamondsuit\diamondsuit$$

\end{document}